\definecolor{commentgreen}{RGB}{2,112,10}
\definecolor{eminence}{RGB}{108,48,130}
\definecolor{frenchplum}{RGB}{129,20,83}
\lstdefinelanguage{code}{
basicstyle=\small\ttfamily,
alsoletter=",
classoffset=1,
keywords={gb, eliminate, saturate, diff, degree, flatten, apply, tensor, product},
keywordstyle={\color{teal}},
classoffset=2,
morekeywords={from, to, list, terms, toList, entries, for, end, if, return},
keywordstyle={\color{blue}},
classoffset=3,
morekeywords={QQ},
keywordstyle={\color{frenchplum}},
classoffset=4,
morekeywords={ideal, matrix, gens},
keywordstyle={\color{teal}},
xleftmargin=1.5cm,
xrightmargin=1em,
columns=fullflexible,
keepspaces=true,
stepnumber=1,
numbers=none,
captionpos=b,
showspaces=false,
frame=none
}
\definecolor{weborange}{RGB}{255,165,0}
\definecolor{darkgray}{rgb}{.4,.4,.4}
\newtheorem{theorem}{Theorem}
\numberwithin{theorem}{section}
\newtheorem{lemma}[theorem]{Lemma}
\newtheorem{corollary}[theorem]{Corollary}
\theoremstyle{definition}
\newtheorem{example}[theorem]{Example}
\newtheorem*{claim*}{\indent Claim}
\newcommand{\PP}{\mathbb{P}}
\newcommand{\CC}{\mathbb{C}}
\newcommand{\RR}{\mathbb{R}}
\newcommand{\ZZ}{\mathbb{Z}}
\DeclareMathOperator*{\mean}{\operatorname{\mathbb E}}
\newcommand{\U}{\mathop{\rm U}\nolimits}
\newcommand{\rk}{\mathop{\rm rk}\nolimits}
\newcommand{\Gr}{\mathop{\rm G}\nolimits}
\newcommand{\mlrk}{\mathop{\rm mlrank}\nolimits}
\newcommand{\Unif}{\mathop{\rm Unif}\nolimits}
\newcommand{\vol}{\mathop{\rm vol}}
\newcommand{\im}{\mathrm{im}}
\newcommand{\iu}{\mathrm{i}}
\newcommand{\St}{\mathop{\rm St}\nolimits}
\newcommand{\B}{\mathcal{B}}
\newcommand{\C}{C}
\newcommand{\NJ}{\mathop{\rm NJ}}
\newcommand{\bfn}{\mathbf{n}}
\newcommand{\bfk}{\mathbf{k}}
\newcommand{\bfa}{\mathbf{a}}
\newcommand{\bfe}{\mathbf{e}}
\newcommand{\bfx}{\mathbf{x}}
\newcommand{\bfy}{\mathbf{y}}
\newcommand{\bfu}{\mathbf{u}}
\newcommand{\bfv}{\mathbf{v}}
\newcommand{\bfw}{\mathbf{w}}
\newcommand{\bfp}{\mathbf{p}}
\newcommand{\bfb}{\mathbf{b}}
\newcommand{\bfc}{\mathbf{c}}
\title{Degree of the Subspace Variety}
\author[Breiding]{Paul Breiding}
\address{Paul Breiding\\ Osnabr\"uck University, Germany}
\email{pbreiding@uni-osnabrueck.de}
\author[Santarsiero]{Pierpaola Santarsiero}
\address{
 Pierpaola Santarsiero\\
Osnabr\"uck University, Germany
}
\email{pierpaola.santarsiero@uni-osnabrueck.de}
\begin{document}

\begin{abstract}
Subspace varieties are algebraic varieties whose elements are tensors with bounded multilinear rank. In this paper, we compute their degrees by computing their volumes.
\end{abstract}

\maketitle

\section{Introduction}
 
Among the many notions of tensor rank, perhaps the most elementary one is the \emph{multilinear rank}, which only relies on linear algebra concepts and dates back to the work of Hitchcock \cite{hit27}. Given a $d$-order tensor $T\in \CC^{n_1}\otimes \cdots \otimes \CC^{n_d}$, the \emph{$i$-th flattening} $T^{(i)}$ of $T$ is the linear map $T^{(i)}\colon  \bigotimes_{j=1,j\neq i}^d \CC^{n_j*}\rightarrow \CC^{n_i}$ defined by contraction. The multilinear rank of $T\in \CC^{n_1}\otimes \cdots \otimes \CC^{n_d}$ is the $d$-uple containing the rank of all flattenings of $T$ and it is usually denoted $\mlrk(T):=(\rk T^{(1)},\dots, \rk T^{(d)})$.
The multilinear rank provides the \emph{Tucker decomposition} \cite{tucker}. Given a tensor $T\in \CC^{n_1}\otimes \cdots \otimes \CC^{n_d}$ of multilinear rank $\bfk = (k_1,\dots,k_d)$, the Tucker decomposition of $T$ factorizes the tensor through a \emph{core tensor} $\C\in \CC^{k_1}\otimes \cdots \otimes \CC^{k_d}$ and $d$ matrices~$A_i\in \CC^{n_i\times k_i}$ of maximal rank as
$T=(A_1\otimes \cdots \otimes A_d)\cdot \C.$

\smallskip
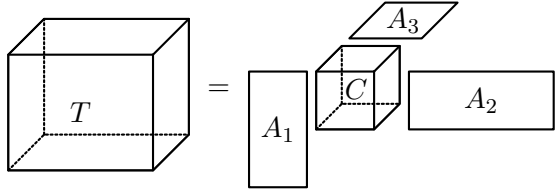
\begin{figure}[h]
	\centering
 \begin{tikzpicture}
\draw (0,0) node{$\mlatensor[$T$]{4,5,3.5} \ = \  \mlarankLMNtensor[$A_1$,$A_2$,$A_3$]{4,5,3.5}{2,2,2.5} $};
\draw (1,0.075) node{$C$};
\end{tikzpicture}
 \caption{Tucker decomposition $T = (A_1 \otimes A_2\otimes A_3)\cdot \C$ for a $3$-order tensor $T\in\mathbb C^{n_1}\otimes \CC^{n_2}\otimes \CC^{n_3}$.}
	\label{tucker image}
\end{figure}
\smallskip
The usual notion of tensor rank is invariant under this operation, and the multilinear rank gives a bound on the usual tensor rank (see, e.g., \cite{Lands}).  The process of decomposing a tensor via the Tucker decomposition is also known in the algebraic literature as \emph{concision process} (see, e.g., \cite{Lands}). 

The Tucker decomposition is useful for dimensionality reduction and it allows us to highlight a core tensor~$\C$ that contains all the meaningful information of the starting tensor. The matrices $A_1,\dots,A_d$ provide a way to store $\C$ in a bigger space and can be thought as a change of coordinate. Another application of tensors of low multilinear rank is that they serve as building blocks in the so-called \emph{block term decomposition} of a tensor $T\in \CC^{n_1}\otimes \cdots \otimes \CC^{n_d}$ \cite{Yang2014,Lathauwer2011,DL2020,SL2015,Lathauwer2008DecompositionsOA}. This is a decomposition of the form $T=T_1+\cdots+T_r$, where the $T_i$ have bounded multilinear rank. For instance, the paper \cite{HCSPVHL2014} uses block term decompositions for detecting epileptic seizures of patients.


Given a tensor format $n_1\times \cdots \times n_d$, not all multilinear ranks $\bfk$ are attainable. For instance, in the matrix case ($d=2$) the multilinear rank $\bfk=(k_1,k_2)$ with $k_1\neq k_2$ is never realizable, because the column rank of a matrix equals its row rank. Moreover, any rank $\bfk=(1,1,k_3),\, k_3\geq 2,$ is never realizable for a $3$-order tensor~$T\in \CC^{n_1}\otimes \CC^{n_2}\otimes \CC^{n_3}$, since as soon as we impose $\rk(T^{(1)})=\rk(T^{(2)})=1$ then we are forced to also have~$\rk(T^{(3)})=1$ (cf. also \cite{CK11}). 

Once we have fixed a realizable multilinear rank $\bfk=(k_1,\dots,k_d)$, we can consider the space of tensors having multilinear rank bounded by $\bfk$. I.e., $\mlrk(T)\leq \bfk$ means that $\rk(T^{(i)})\leq k_i$ for all $i=1,\ldots,d$.
Further, since multilinear rank is invariant under scalar multiplication, we can pass to the projective space and consider
$$
X_{\bfk}\ :=\ \left\{ T\in \PP^{n_1\cdots n_d-1}\ \mid\ \mlrk(T)\leq \bfk  \right\}.
$$
This variety is the so-called \emph{subspace variety} \cite[Section 3.4.1]{Lands}, which can also be seen as the intersection of some determinantal varieties of matrices with bounded rank.
Landsberg and Weyman \cite{LWtucker} studied the ideal defining $X_{\bfk}$ and proved that $X_{\bfk}$ is normal and has rational singularities. However, even though it is easy to define $X_{\bfk}$, many basic algebraic properties of the subspace variety remain unknown. For instance, \emph{what is the degree of $X_{\bfk}$?} 

The purpose of this paper is to answer the latter question. To state our main result, we denote by~$\Gr(k,n)$ the Grassmannian of $k$-dimensional linear spaces in $\mathbb C^n$. Its dimension is $\dim_{\CC} \Gr(k,n)=k(n-k)$, and its degree (in the Pl\"ucker embedding) is given by the next formula (see, e.g., \cite[Theorem 5.13]{MS}):
$$\deg(\Gr(k,n))\ =\ \frac{(k(n-k))!}{\prod_{j=1}^k j\cdot (j+1)\cdots (j+n-k-1)}.$$
 Furthermore, if we also set $\bfn=(n_1,\dots,n_d)$, where $n_i\geq k_i$ for all $i=1,\dots,d$, then we define a function $f(\bfk, \bfn)$ as follows. Let us write
 $K := k_1\cdots k_d - 1$ and  $D := \sum_{\ell=1}^d \dim_{\CC} \Gr(k_\ell,n_\ell) = \sum_{\ell=1}^d k_\ell(n_\ell-k_\ell).$
 Suppose that $A$ and~$B$ are $k_1\times \cdots \times k_d$ tensors whose entries are the variables $a_{i_1,\dots,i_d}$ and $b_{i_1,\dots,i_d}$ respectively. We collect the entries of $A$ and $B$ in $(K+1)$-vectors $\bfx=(a_{i_1,\dots,i_d})$ and $\bfy=(b_{i_1,\dots,i_d})$. Consider the polynomial $p(\bfx,\bfy)=\prod_{\ell=1}^d \det(A^{(\ell)}(B^{(\ell)})^T)^{n_\ell - k_\ell}$ of bi-degree $(D,D)$ . Set $\alpha=(\alpha_0,\dots,\alpha_K)$ and $\beta=(\beta_0,\dots,\beta_K)$. If~$\sum_{\alpha, \beta} c_{\alpha, \beta} \bfx^\alpha  \bfy^\beta$ is the expansion of $p(\bfx,\bfy) $ in the monomial basis, then 
\begin{equation}\label{def_f}f(\bfk, \bfn)\ :=\ \sum_{\alpha}\, c_{\alpha, \alpha} \cdot \frac{\alpha_0! \cdots \alpha_K!}{(k_1(n_1-k_1))! \cdots (k_d(n_d-k_d))!}.
\end{equation}
The following is then our main result.
\begin{theorem}\label{main}Let $\bfk=(k_1,\dots,k_d)$, $\bfn=(n_1,\dots,n_d)$, where $k_i\leq n_i$ for all $i=1,\dots,d$ and set $N=n_1\cdots n_d-1$.
Suppose that $X_{\bfk}\subset \PP^N$ contains a tensor of multilinear rank equal to $\bfk$. Then,
$$\deg(X_{\bfk})\ =\  \deg(\Gr(k_1,n_1)) \cdots \deg(\Gr(k_d,n_d)) \cdot f(\bfk,\bfn).$$
\end{theorem}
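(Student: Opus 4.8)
The plan is to compute the Riemannian volume of $X_{\bfk}$ with respect to the Fubini--Study metric on $\PP^{N}$ and then use the classical identity $\deg(Y)=\vol(Y)/\vol(\PP^{\dim_{\CC}Y})$, valid for any irreducible complex projective variety $Y$; equivalently, passing to affine cones and the unit sphere, $\deg(X_{\bfk})=\vol(\widehat X_{\bfk}\cap S^{2N+1})/\vol(S^{2(D+K)+1})$, with $D,K$ as in the statement and $\widehat X_{\bfk}\subset\CC^{N+1}$ the affine cone. The variety $X_{\bfk}$ is irreducible, being the image of the total space of the projective bundle $\PP(\mathcal S_{1}\otimes\cdots\otimes\mathcal S_{d})$ over $\Gr(k_{1},n_{1})\times\cdots\times\Gr(k_{d},n_{d})$, where $\mathcal S_{\ell}$ is the tautological subbundle and the tensor product is external; under the hypothesis that $X_{\bfk}$ contains a tensor of multilinear rank exactly $\bfk$, the locus of such tensors is dense in $X_{\bfk}$. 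Hence the volume can be computed by integrating over the Tucker parametrization $\psi\colon\St(k_{1},n_{1})\times\cdots\times\St(k_{d},n_{d})\times S(\CC^{k_{1}\cdots k_{d}})\to S^{2N+1}$, $\psi(A_{1},\dots,A_{d},C)=(A_{1}\otimes\cdots\otimes A_{d})\cdot C$, where $\St(k,n)$ is the complex Stiefel manifold of orthonormal $k$-frames with the metric induced from $\CC^{n\times k}$. Its image is dense in $\widehat X_{\bfk}\cap S^{2N+1}$, and over a tensor of multilinear rank $\bfk$ the fibre is a single free orbit of $\mathrm U(k_{1})\times\cdots\times\mathrm U(k_{d})$ acting by $(A_{\ell},C)\mapsto(A_{\ell}g_{\ell},\,(g_{1}^{-1}\otimes\cdots\otimes g_{d}^{-1})C)$.

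I would then apply the coarea formula to $\psi$, writing $\vol(\widehat X_{\bfk}\cap S^{2N+1})=\int \frac{\NJ\psi(z)}{\vol(\psi^{-1}(\psi(z)))}\,dz$ over the domain, where $\NJ$ is the normal Jacobian. Because $\mathrm U(n_{1})\times\cdots\times\mathrm U(n_{d})$ acts isometrically on the domain (on the Stiefel factors) and $\psi$ is equivariant, the integrand is invariant under this action; since $\mathrm U(n_{\ell})$ acts transitively on $\St(k_{\ell},n_{\ell})$, the integral reduces to $\prod_{\ell}\vol(\St(k_{\ell},n_{\ell}))$ times the integral over $C\in S(\CC^{k_{1}\cdots k_{d}})$ of the integrand evaluated at a fixed standard frame $(E_{1},\dots,E_{d})$.

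The technical heart is the evaluation of $\NJ\psi$ and of the orbit volume at $(E_{1},\dots,E_{d},C)$. Writing $\CC^{n_{\ell}}=L_{\ell}\oplus L_{\ell}^{\perp}$ with $L_{\ell}=\operatorname{colspan}E_{\ell}$, the image of $d\psi$ lies in $(L_{1}\otimes\cdots\otimes L_{d})\oplus\bigoplus_{\ell}(L_{1}\otimes\cdots\otimes L_{\ell}^{\perp}\otimes\cdots\otimes L_{d})$, since $\psi$ has degree one in each $A_{\ell}$ and a derivative can introduce at most one $L_{\ell}^{\perp}$-factor. One checks that $\ker d\psi$ equals the tangent space to the $\prod_{\ell}\mathrm U(k_{\ell})$-orbit, and that on the orthogonal complement of this kernel $d\psi$ is block-diagonal for the above decomposition: the ``$L_{\ell}^{\perp}$-blocks'' are multiplication by the $\ell$-th flattening $C^{(\ell)}$ and contribute the factor $\prod_{\ell}\det(C^{(\ell)}(C^{(\ell)})^{*})^{n_{\ell}-k_{\ell}}$, while the $(L_{1}\otimes\cdots\otimes L_{d})$-block contributes $\det(I+\Xi\Xi^{*})^{1/2}$, where $\Xi\colon\bigoplus_{\ell}\mathfrak u(k_{\ell})\to T_{C}S(\CC^{k_{1}\cdots k_{d}})$ is the infinitesimal action $(\xi_{1},\dots,\xi_{d})\mapsto-\sum_{\ell}(\mathrm{id}\otimes\cdots\otimes\xi_{\ell}\otimes\cdots\otimes\mathrm{id})C$. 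On the other hand the orbit through $(E_{1},\dots,E_{d},C)$ is an isometric copy of $\prod_{\ell}\mathrm U(k_{\ell})$ (with bi-invariant Frobenius metric) sheared by $\Xi$, so it has volume $\prod_{\ell}\vol(\mathrm U(k_{\ell}))\cdot\det(I+\Xi^{*}\Xi)^{1/2}$. Since $\Xi\Xi^{*}$ and $\Xi^{*}\Xi$ have the same nonzero eigenvalues, $\det(I+\Xi\Xi^{*})=\det(I+\Xi^{*}\Xi)$, so the two $\Xi$-dependent factors cancel, leaving $\vol(\widehat X_{\bfk}\cap S^{2N+1})=\prod_{\ell}(\vol(\St(k_{\ell},n_{\ell}))/\vol(\mathrm U(k_{\ell})))\cdot\int_{S(\CC^{k_{1}\cdots k_{d}})}\prod_{\ell}\det(C^{(\ell)}(C^{(\ell)})^{*})^{n_{\ell}-k_{\ell}}\,dC$. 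I expect this cancellation to be the main obstacle: it requires pinning down $\ker d\psi$ and its orthogonal complement inside the tangent space of the domain, and understanding precisely how the ``twisting'' of the Stiefel bundle over the Grassmannian enters both the normal Jacobian and the fibre volume.

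It remains to identify the pieces. The ratio $\vol(\St(k_{\ell},n_{\ell}))/\vol(\mathrm U(k_{\ell}))$ equals the volume of $\Gr(k_{\ell},n_{\ell})$ in the metric induced from the Pl\"ucker embedding (the Stiefel bundle is a Riemannian submersion with fibres isometric to $\mathrm U(k_{\ell})$, and the quotient metric coincides with the Pl\"ucker one), hence equals $\deg(\Gr(k_{\ell},n_{\ell}))\cdot\vol(\PP^{k_{\ell}(n_{\ell}-k_{\ell})})$ by the same degree-equals-volume identity. Finally, $\prod_{\ell}\det(C^{(\ell)}(C^{(\ell)})^{*})^{n_{\ell}-k_{\ell}}$ is precisely $p(C,\overline C)$ for the polynomial $p$ of bi-degree $(D,D)$ in the statement; expanding $p=\sum_{\alpha,\beta}c_{\alpha,\beta}\bfx^{\alpha}\bfy^{\beta}$, using orthogonality $\int_{S(\CC^{K+1})}C^{\alpha}\overline C^{\beta}\,dC=0$ for $\alpha\neq\beta$ and the moment identity $\int_{S^{2K+1}}|z_{0}|^{2\alpha_{0}}\cdots|z_{K}|^{2\alpha_{K}}\,dz=\vol(S^{2K+1})\cdot K!\,\alpha_{0}!\cdots\alpha_{K}!/(K+D)!$ (every monomial of $p$ has degree $D$ in $\bfx$), the integral equals an explicit constant times $\sum_{\alpha}c_{\alpha,\alpha}\,\alpha_{0}!\cdots\alpha_{K}!$. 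Substituting $\vol(\PP^{m})=\pi^{m}/m!$ and $\vol(S^{2m+1})=2\pi^{m+1}/m!$ and simplifying, every power of $\pi$ and every factorial cancels except those already present in $f(\bfk,\bfn)$, giving $\deg(X_{\bfk})=\deg(\Gr(k_{1},n_{1}))\cdots\deg(\Gr(k_{d},n_{d}))\cdot f(\bfk,\bfn)$.
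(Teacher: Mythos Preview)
Your strategy coincides with the paper's: both compute $\vol(X_{\bfk})$ via the coarea formula applied to the Tucker parametrization from Stiefel manifolds, use $\prod_\ell\U(n_\ell)$-equivariance to reduce to an integral over the core $C$, and evaluate the resulting integral of $\prod_\ell\det\bigl(C^{(\ell)}(C^{(\ell)})^{*}\bigr)^{n_\ell-k_\ell}$ via Gaussian (equivalently, spherical) moments; that you work on the sphere while the paper works in $\PP^{K}$ is cosmetic. The one substantive difference is how the $\mathfrak u(k_\ell)$-contribution is eliminated. You do it directly: the normal Jacobian carries a factor $\det(I+\Xi\Xi^{*})^{1/2}$, the fibre volume carries $\prod_\ell\vol\U(k_\ell)\cdot\det(I+\Xi^{*}\Xi)^{1/2}$, and the $\Xi$-factors cancel by the identity $\det(I+AB)=\det(I+BA)$. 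The paper instead takes the fibre volume to be exactly $\prod_\ell\vol\U(k_\ell)$ (Corollary~3.3), so a residual factor $\sqrt{\det R}$ survives in the normal Jacobian (Theorem~3.6); this factor is then removed by a second coarea argument in the degenerate case $\bfn=\bfk$ (Lemma~4.1, following Beltr\'an). Your cancellation is more explicit and self-contained, pinpointing exactly why the ``twisting'' of the fibre by the action on $C$ does not affect the final answer; the paper's route trades that linear-algebra identity for an indirect trick that reuses the coarea machinery already in place.
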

In particular, when $\bfk = \bfn-\mathbf 1$ or $\bfk = \mathbf 1$, where $\mathbf 1$ is the all-one vector, the $\Gr(k_i,n_i)$ are projective spaces and their degree is equal to one. In this case, we therefore get the formulas $\deg(X_{\bfn-\mathbf 1}) =  f(\bfn-\mathbf 1,\bfn)$ and~$\deg(X_{\mathbf 1}) =  f(\mathbf 1,\bfn).$

\begin{example}
In the case $\bfk = \mathbf 1$, the subspace variety becomes the Segre variety of $\PP^{n_1-1}\times \cdots \times \PP^{n_d-1}$ in $\mathbb P^N$. Here, we have $K=0$, $D= \sum_{i=1}^d (n_i-1)$ and $p(\bfx,\bfy)=(x_0y_0)^{D}$, so that $$f(\mathbf 1, \bfn)\ =\  D!\, /\, ((n_1-1)! \cdots (n_d-1)!).$$ Our formula thus yields $\deg(X_{\mathbf 1}) = D! / ((n_1-1)! \cdots (n_d-1)!)$, which is the familiar formula for the degree of the Segre variety (see, e.g., \cite[Example 18.15]{harris}). 
\end{example}
\begin{example}\label{ex2}
The following tables show some degrees of $X_{\bfk} \subset \mathbb P^N$  computed using \Cref{main}.

\begin{minipage}{0.45\textwidth}
\begin{center}
\begin{tabular}{cccc}
\toprule
& \multicolumn{3}{c}{$\bfn$} \\
\cmidrule{2-4}
    $\bfk$ & $(3,3,3)$ &  $(3,3,4)$ & $(3,4,4)$  \\
\midrule 
    $(1,2,2)$ & 108  &   330  &     1560   \\     
    $(2,2,2)$ &  783  & 5175    &  44760  \\     
    $(2,2,3)$ &  306  & 2952    &  19320  \\     
\bottomrule
\end{tabular}
\end{center}
\end{minipage}\quad
\begin{minipage}{0.45\textwidth}
\begin{center}
\begin{tabular}{cccc}
\toprule
& \multicolumn{3}{c}{$\bfn$} \\
\cmidrule{2-4}
    $\bfk$ & $(2,2,3,3)$ &  $(2,3,3,3)$ & $(3,3,3,3)$  \\
\midrule 
    $(1,1,2,2)$ & 216  &   1080  &     5940    \\     
    $(1,2,2,2)$ &  684  & 10962     &   82215  \\     
    $(1,2,2,3)$ &  210  &   4896    &   41616  \\         
\bottomrule
\end{tabular}
\end{center}
\end{minipage}

\end{example}

 To compute the degree of the subspace variety in \Cref{main}, we will use the following result that connects the degree of a projective variety with its volume. The unitary group $\U(N+1)$ acts on $\PP^N$, and there is a unique (up to scaling) Riemannian metric that is invariant under this action; see \mbox{\cite[p.~1216]{cartan1952oeuvres}} or \cite[Section~4.1]{leichtweiss}. Let $\mathrm{vol}(\cdot)$ be the volume measure associated to this metric. The degree of a complex projective variety $X\subset \PP^N$ is then its normalized volume:
	\begin{align}\label{eq:degree_via_volume}
	\deg(X)\ =\ \frac{\vol(X)}{\vol (\PP^{m})}, \quad m=\dim_{\CC} X.
	\end{align}
This is a consequence of the kinematic formula in complex projective space; see \cite[Corollary~3.9]{howard} and \cite[Example 3.12 (b)]{howard}. We also refer to \cite[Section 12.3]{MAG}.

\subsection*{Outline}
In \Cref{section: prelim} we set up our notation and we give preliminaries of both integral and random geometry. \Cref{section: tucker_manifold} is devoted to study the smooth part of the subspace variety as a complex manifold. We realize it as the image of a smooth map and we understand the normal Jacobian of this map. In \Cref{section: volume_tucker} we compute the volume of the subspace variety and prove \Cref{main}. Lastly, \Cref{appendix: code} contains a \texttt{Macaulay2} \cite{M2} code to compute the degrees of subspace varieties.

\subsection*{Acknowledgements}
Both authors have been funded by the Deutsche Forschungsgemeinschaft (DFG) -- Projektnummer 445466444.

\bigskip
\section{Preliminary notions}\label{section: prelim}
Let us start by setting up the notation we will use along the manuscript. Vectors are denoted in boldface. The standard unit vectors are denoted $\bfe_j\in \mathbb R^k$, $j=1,\ldots,k$ and for $i,j=1,\dots,k$, we denote $E_{i,j}:=\bfe_{i}\bfe_j^T$ the $k\times k$ matrix with a $1$ in position $(i,j)$ and $0$ elsewhere.  We denote $\bfn=(n_1,\dots,n_d)$ and $\bfk=(k_1,\dots,k_d) \in \ZZ^{d}_{\geq 0} $, with $k_i\leq n_i$ for all $i=1,\dots,d$. As in the introduction, we set 
\begin{equation}\label{def_NKD}N\ :=\ n_1\cdots n_d-1, \quad  K\ :=\ k_1\cdots k_d-1, \quad \text{ and }\quad D\ :=\ \sum_{\ell=1}^d k_\ell(n_\ell-k_\ell), \end{equation}
so that our space of tensors is $\PP(\mathbb C^{n_1}\otimes \cdots \otimes \mathbb C^{n_d}) \cong \PP^N$, while the core tensor lives in $\PP(\mathbb C^{k_1}\otimes \cdots \otimes \mathbb C^{k_d}) \cong \PP^K$. 

Given a Euclidean vector space $(U,\langle \cdot, \cdot \rangle)$ and two subspaces $V,W\subset U$, we denote $V\perp W$ the case when they are orthogonal; i.e., when for all $v\in V$, $ w\in W$ we have $\langle v,w\rangle = 0$. The orthogonal complement of~$V$ is denoted $V^\perp := \{u\in U \mid \forall v\in V: \langle u,v\rangle = 0\}$.

The projective space  $\PP^N$ is a Riemannian manifold with respect to the \emph{Fubini-Study inner product} $\langle \cdot, \cdot\rangle_T$ defined on the tangent space $\mathrm{T}_T\PP^N$ at a point $T\in \PP^N$. This works as follows. Take a representative $T_0\in\mathbb C^{N+1}$ for $T\in \PP^N$. Then, we have $\mathrm{T}_T\PP^N \cong T_0^\perp = \{Q \in\CC^{N+1} \mid \langle Q,T_0\rangle = 0\}$. The Fubini-Study inner product is given by 
$$\langle Q,R\rangle_T\ =\ \frac{\mathrm{Re}\langle Q,R\rangle}{\Vert T_0\Vert},\quad Q,R\in T_0^\perp,$$
where the latter is the standard Hermitian inner product on $\mathbb C^{N+1}$ and $\Vert T_0\Vert = \langle T_0,T_0\rangle^\frac{1}{2}$ is the norm.

The measure $\vol(\cdot )$ induced by this inner product satisfies the degree--volume formula in \mbox{\Cref{eq:degree_via_volume}}, since it is unitarily invariant. Thus, the goal is to compute the volume of $X_{\bfk}$ with respect to the Fubini-Study metric.
Since it will be used later, let us also recall volume formulae for $\PP^N$:
\begin{equation}\label{vol_PP}
\vol(\PP^N)\ =\ \frac{\pi^N}{N!}.
\end{equation}

\subsection{The Stiefel manifold}
We denote $1_n$ the identity matrix of size $n$ and by $0_{n\times k}$ we denote the~$n\times k$ matrix whose entries are all zeros. The unitary group is denoted by $\U(n)$. It is a real Lie group of dimension $\dim_{\RR} \U(n) = n^2$. The conjugate transpose of $A \in \CC^{n\times k}$ is denoted $A^* \in \CC^{k\times n}$. 

The \emph{Stiefel manifold} is 
$$\St(k,n)\ :=\ \{A \in \CC^{n\times k} \mid A^*A=1_k \}.$$
It is a smooth manifold, and a homogeneous space under $\U(n)$ acting by multiplication from the left. The stabilizer group is $\U(n-k)$, so that $\St(k,n) \cong \U(n) / \U(n-k)$. In particular, the dimension is~$\dim_{\RR} \St(k,n) = \dim_{\RR} \U(n) - \dim_{\RR} \U(n-k ) = 2nk - k^2$. 
We remark that the Stiefel manifold is related to the Grassmannian $\Gr(k,n)$ since $\Gr(k,n)\cong \St(k,n)/\U(k)$. 

In the following lemma we present an orthonormal basis of the tangent space of the Stiefel manifold with respect to the~$\U(n)$-invariant Riemannian metric induced by the inner product 
\begin{equation}\label{IP_St}
    (A,B) \mapsto \frac{1}{2}\,\mathrm{Re}(\mathrm{Trace}(A^*B))
\end{equation}
defined on the ambient space $\mathbb C^{n\times k}$. It is enough to compute the tangent space at a distinguished point.
\begin{lemma}\label{lemma:orthobasis_stiefel}
	Let 
 $A=\left[\begin{smallmatrix}
	1_{k}  \\
	0_{(n-k)\times k} 
\end{smallmatrix}\right] \in \St(k,n)$ 
be the Stiefel matrix with the identity on top and the zero matrix at the bottom. Denote $\iu = \sqrt{-1}$. 
The following $2nk-k^2$ matrices form an orthonormal basis for the tangent space~$\mathrm{T}_{A}\St(k,n)$ of the Stiefel manifold at $A$:
\smallskip
\begin{equation}\label{eq: ortho_basis_stiefel}
\begin{aligned}
&\begin{bmatrix}
E_{i,j} - E_{j,i} \\  0_{(n-k)\times k }
\end{bmatrix}, \; \begin{bmatrix}
	\, \iu\cdot (E_{i,j} +  E_{j,i}) \\  0_{(n-k)\times k }
\end{bmatrix}, 1\leq i< j\leq k, \quad 
\begin{bmatrix}
\sqrt{2}\iu \cdot E_{i,i} \\  0_{(n-k)\times k }
\end{bmatrix}, 1\leq i\leq k,\\[0.5em]
&\begin{bmatrix}
0_{k} \\  \sqrt{2}\cdot E_{i,j}
\end{bmatrix}, \; \begin{bmatrix}
0_{k} \\  \sqrt{2}\iu\cdot E_{i,j}
\end{bmatrix},  1\leq i\leq n-k, 1\leq j\leq k.
\end{aligned}
\end{equation}
\end{lemma}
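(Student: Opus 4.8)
The plan is first to identify the tangent space $\mathrm{T}_A\St(k,n)$ explicitly, and then to verify that the $2nk-k^2$ listed matrices form an orthonormal set of the correct cardinality. For the tangent space, I would differentiate the defining equation: if $t\mapsto A(t)$ is a smooth curve in $\St(k,n)$ with $A(0)=A$, then differentiating $A(t)^*A(t)=1_k$ at $t=0$ gives $\dot A^*A + A^*\dot A = 0$. Writing the ambient velocity as $\dot A=\left[\begin{smallmatrix} X\\ Y\end{smallmatrix}\right]$ with $X\in\CC^{k\times k}$ and $Y\in\CC^{(n-k)\times k}$, and using that $A^*\dot A = X$ for our distinguished $A$, the condition becomes $X^*+X=0$. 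Hence
\[
\mathrm{T}_A\St(k,n)\ =\ \left\{ \begin{bmatrix} X\\ Y\end{bmatrix}\ :\ X\in\CC^{k\times k}\text{ skew-Hermitian},\ Y\in\CC^{(n-k)\times k}\right\},
\]
which is a real vector space of dimension $k^2 + 2(n-k)k = 2nk-k^2$, matching $\dim_\RR\St(k,n)$.

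Next I would check that each listed matrix lies in this tangent space. The three families in the first line of \eqref{eq: ortho_basis_stiefel} have zero bottom block and skew-Hermitian top block — indeed $E_{i,j}-E_{j,i}$ and $\iu(E_{i,j}+E_{j,i})$ are skew-Hermitian for $i\neq j$, and $\iu E_{i,i}$ is skew-Hermitian — while the two families in the second line have zero top block and are therefore automatically tangent. Counting, the first line contributes $2\binom{k}{2}+k=k^2$ matrices and the second line contributes $2(n-k)k$, for the total $2nk-k^2=\dim_\RR\St(k,n)$. Thus it remains only to prove that these matrices are pairwise orthonormal for the inner product $\langle M,M'\rangle=\tfrac12\,\mathrm{Re}\,\mathrm{Trace}(M^*M')$ from \eqref{IP_St}.

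This last step is a direct computation using $E_{a,b}E_{c,d}=\delta_{bc}E_{a,d}$ and $\mathrm{Trace}(E_{a,b})=\delta_{ab}$. Any matrix from the first line is orthogonal to any matrix from the second line, since one has support only in the first $k$ rows and the other only in the last $n-k$ rows, so $M^*M'=0$. Within the first line, matrices attached to distinct index pairs have the relevant trace vanish by support considerations; for a fixed pair $i<j$ one computes $\mathrm{Trace}\big((E_{i,j}-E_{j,i})^*\,\iu(E_{i,j}+E_{j,i})\big)\in\iu\RR$, whose real part is $0$; and the normalizations are exactly chosen so that $\tfrac12\,\mathrm{Re}\,\mathrm{Trace}(M^*M)=1$, e.g. $\mathrm{Trace}\big((E_{i,j}-E_{j,i})^*(E_{i,j}-E_{j,i})\big)=\mathrm{Trace}(E_{i,i}+E_{j,j})=2$, which explains why the off-diagonal generators carry no extra factor but the diagonal ones carry $\sqrt2$. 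An entirely analogous but simpler computation handles the second line, where the factor $\sqrt2$ again yields unit norm and distinct $(i,j)$ are orthogonal. No deep idea is involved; the only real obstacle is keeping the case analysis organized and correctly tracking the interplay between the factors $\iu$, $\sqrt2$, and the $\tfrac12\,\mathrm{Re}$ in the metric. I would therefore present the verification by grouping the pairs $(M,M')$ into the few qualitatively distinct cases above rather than treating them one by one.
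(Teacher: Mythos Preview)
Your proposal is correct and follows essentially the same approach as the paper: identify $\mathrm{T}_A\St(k,n)$ by differentiating the defining relation $A^*A=1_k$, observe that the listed matrices lie in this tangent space, count them, and check orthonormality directly. If anything, you spell out the orthonormality verification in more detail than the paper, which simply asserts that the matrices are pairwise orthogonal and of unit norm.
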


\begin{proof} 
Consider the smooth map $f:\CC^{n\times k}\to \mathbb C^{k\times k}$ given by $f(M)=M^*M$ and recall that the Stiefel manifold can be realized as $\St(k,n)=f^{-1}(1_k)$. 
The differential of this map is $B \mapsto M^*B+B^*M$ and
knowing how the differential behaves allows us to also compute the tangent space of $\St(k,n)$ at $A$:
\begin{align*}
	\mathrm{T}_{A}\St(k,n)\ &=\ \{ M \in \CC^{n\times k}\mid A^*M=-M^*A   \} \\ &=\ \{ M=\begin{bmatrix}
		B_1 &  B_2
	\end{bmatrix}^T \in \CC^{n\times k}\mid  B_1 \in\mathbb C^{k\times k}: B_1^*=-B_1,\  B_2\in \CC^{(n-k)\times k} \}.
\end{align*}
Thus, the matrices in \Cref{eq: ortho_basis_stiefel} are all in $\mathrm{T}_A\St(k,n)$. Moreover, they are pairwise orthogonal, all of norm one and their number is equal to the dimension of $\St(k,n)$. So they form an orthonormal basis.
\end{proof}

\subsection{The coarea formula}\label{subsection:preliminaries_integral_random}
Let $X, Y$ be Riemannian manifolds with dimensions $\dim(X)\geq \dim(Y)$ and let $F\colon X\rightarrow Y$ be surjective and smooth. Fix a point $\bfx\in X$. The \emph{normal Jacobian} $\mathrm{NJ}(F,\bfx)$ of $F$ at $\bfx$ is 
$$\mathrm{NJ}(F,\bfx)\ =\  \big\vert \det \mathrm D_\bfx F\arrowvert_{(\ker \mathrm D_\bfx F)^\perp} \big\vert,$$
where the matrix representation of the derivative $\mathrm D_\bfx F$ is relative to orthonormal bases in $\mathrm{T}_\bfx X$ and $\mathrm{T}_{F(\bfx)}Y$.
The \emph{coarea formula} (cf. \cite[Section A-2]{howard}) states that for any integrable function $h:X\to \RR$ we have
\begin{equation}\label{coarea_formula}\int_X h(\bfx) \,\mathrm d\bfx\  =\  \int_{\bfy\in Y} \left(\int_{\bfx\in F^{-1}(\bfy)} \frac{h(\bfx)}{\mathrm{NJ}(F,\bfx)} \,\mathrm d \bfx\right)\,\mathrm d \bfy.
\end{equation}

\subsection{Expected values of polynomials evaluated at complex Gaussians}
We denote the standard normal distribution by $N(0,1)$.
The standard complex normal random variable $Z\sim N_{\CC}(0,1)$ on $\CC$ is~$Z=Z_1+\iu \cdot Z_2$, where $Z_1,Z_2\in\RR $ are i.i.d.\ and $Z_1,Z_2\sim N(0,1)$. 

Consider a Gaussian vector ${\bf Z}=(Z_1,\dots,Z_n)\in\mathbb C^n$, where $Z_1,\dots,Z_n \sim N_\CC(0,1)$ are i.i.d. We denote the distribution of ${\bf Z}$ also by $N_{\CC}(0,1)$. 
The norm squared $\Vert {\bf Z}\Vert^2  = \overline Z_1 Z_1 + \cdots +\overline Z_n Z_n $ of ${\bf Z}\sim N_{\CC}(0,1)$ has the \emph{chi-squared distribution} with $2n$ degrees of freedom. We denote this probability distribution by $\chi^2_{2n}$. The moments of $\rho \sim \chi^2_{2n}$ are given by
\begin{equation}\label{eq_chisq}
\mean_{\rho \sim \chi_{2n}^2 }\rho^k\ =\ 2^{k} \cdot\frac{(k+n-1)!}{(n-1)!}.
\end{equation}

The normalization ${\bf Z}/\Vert {\bf Z}\Vert$,  ${\bf Z}\sim N_{\CC}(0,1)$, has the \emph{uniform distribution} on the sphere $\mathbb S^{2n-1}$ and we will denote this distribution by $\mathrm{Unif}(\mathbb{S}^{2n-1})$. 

Let $\alpha=(\alpha_1,\dots,\alpha_n)$, $\beta=(\beta_1,\dots,\beta_n)$ be vectors of non-negative integers and let ${\bf Z}\sim N_\CC(0,1)$. Since it will be useful later, in the following lemma, we understand the expected value of a bi-variate polynomial evaluated at Gaussian vectors.
\begin{lemma}\label{lemma: mean_bivariate_poly}
Consider $f({\bf Z},{\bf{ \overline{Z}}}) :=\sum_{\alpha,\beta}c_{ \alpha, \beta} \cdot  Z_1^{\alpha_1}\overline{Z}_1^{\beta_1}\cdots Z_n^{\alpha_n}\overline{Z}_n^{\beta_n}$. Then,
$$
\mean_{{\bf Z}\sim N_{\CC}(0,1)}f({\bf Z},{\bf{ \overline{Z}}})\ =\ \sum_{ \alpha, \alpha}c_{ \alpha, \alpha} \cdot 2^{\alpha_1+\cdots+\alpha_n}\cdot \alpha_1!\cdots \alpha_n!.
$$
\end{lemma}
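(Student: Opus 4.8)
The plan is to reduce the computation to the case of a single standard complex Gaussian and then to evaluate that one-dimensional expectation in polar coordinates, using the moment formula \eqref{eq_chisq}.

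First, by linearity of expectation (the sum defining $f$ is finite), it suffices to compute $\mean_{{\bf Z}\sim N_\CC(0,1)}\big[Z_1^{\alpha_1}\overline Z_1^{\beta_1}\cdots Z_n^{\alpha_n}\overline Z_n^{\beta_n}\big]$ for fixed exponent vectors $\alpha,\beta$. Since $Z_1,\dots,Z_n$ are independent, this expectation factors as $\prod_{j=1}^n \mean_{Z\sim N_\CC(0,1)}[Z^{\alpha_j}\overline Z^{\beta_j}]$, so everything comes down to computing $I(a,b) := \mean_{Z\sim N_\CC(0,1)}[Z^a\overline Z^b]$ for non-negative integers $a,b$.

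To evaluate $I(a,b)$ I would write $Z=\sqrt\rho\, e^{\iu\theta}$ with $\rho=|Z|^2$ and $\theta=\arg Z$. As recalled before the statement, for $Z\sim N_\CC(0,1)$ the modulus and the argument are independent, with $\rho\sim\chi^2_2$ and $\theta\sim\Unif([0,2\pi))$; indeed $Z/\|Z\|\sim\Unif(\mathbb S^1)$ is independent of $\|Z\|$. Since $Z^a\overline Z^b=\rho^{(a+b)/2}e^{\iu(a-b)\theta}$, independence gives
\[
I(a,b)\ =\ \mean_{\rho\sim\chi^2_2}\big[\rho^{(a+b)/2}\big]\cdot\frac{1}{2\pi}\int_0^{2\pi}e^{\iu(a-b)\theta}\,\mathrm d\theta.
\]
The angular integral is $0$ when $a\neq b$ and $1$ when $a=b$; and when $a=b$ the radial factor is $\mean_{\rho\sim\chi^2_2}[\rho^a]=2^a a!$ by \eqref{eq_chisq} with $n=1$. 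Hence $I(a,b)=2^a a!$ if $a=b$ and $I(a,b)=0$ otherwise.

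Plugging this back, the product $\prod_{j=1}^n I(\alpha_j,\beta_j)$ vanishes unless $\alpha_j=\beta_j$ for all $j$, i.e.\ unless $\alpha=\beta$, in which case it equals $2^{\alpha_1+\cdots+\alpha_n}\,\alpha_1!\cdots\alpha_n!$. Summing the surviving terms over $\alpha$ yields the claimed formula. The argument is entirely routine; the only step worth stating with care is the independence of the modulus and argument of a standard complex Gaussian---precisely the fact recalled before the lemma---and keeping track that the relevant radial law in the one-variable reduction is $\chi^2_2$ rather than $\chi^2_{2n}$.
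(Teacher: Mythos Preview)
Your proof is correct and follows essentially the same approach as the paper: linearity of expectation, independence to factor into one-dimensional expectations, then a rotational symmetry argument to kill the terms with $\alpha\neq\beta$, and finally the $\chi^2_2$ moment formula for the surviving diagonal terms. The only cosmetic difference is that the paper argues the vanishing via the distributional invariance $Z_1\sim e^{\iu\phi}Z_1$ (so $\mean Z_1^{\alpha_1}\overline{Z}_1^{\beta_1}=e^{\iu\phi(\alpha_1-\beta_1)}\mean Z_1^{\alpha_1}\overline{Z}_1^{\beta_1}$), whereas you make the polar decomposition explicit and invoke independence of modulus and argument; both are the same idea.
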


\begin{proof}
Since expectation is linear, we have 
$
\mean_{{\bf Z}\sim N_{\CC}(0,1)}f({\bf Z},{\bf{ \overline{Z}}})\  =\  \sum_{ \alpha, \beta}c_{ \alpha,\beta}\cdot \mean \big( Z_1^{\alpha_1}\overline{Z}_1^{\beta_1}\cdots Z_n^{\alpha_n}\overline{Z}_n^{\beta_n}\big),
$
and since $Z_1,\dots,Z_n$ are i.i.d,
\begin{align*}
\mean_{{\bf Z}\sim N_{\CC}(0,1)}f({\bf Z},{\bf{ \overline{Z}}})\ =\ \sum_{\alpha,\beta}c_{ \alpha, \beta}\cdot \mean (Z_1^{\alpha_1}\overline{Z}_1^{\beta_1})\cdots \mean (Z_n^{\alpha_n}\overline{Z}_n^{\beta_n}).
\end{align*}
Hence we can focus on one of the above expected values, say $\mean (Z_1^{\alpha_1}\overline{Z}_1^{\beta_1})$. For every $\phi\in[0,2\pi]$ we have $Z_1\sim e^{\phi\cdot \iu}\cdot Z_1$, so that 
$\mean Z_1^{\alpha_1}\overline{Z_1}^{\beta_1}=e^{\phi\cdot (\alpha_1-\beta_1)\cdot \iu}\mean Z_1^{\alpha_1}\overline{Z}_1^{\beta_1}$. This implies that either $\mean Z_1^{\alpha_1}\overline{Z_1}^{\beta_1} = 0$ or~$\alpha_1=\beta_1$. In the latter case, we have $\mean \left(Z_1\overline{Z_1}\right)^{\alpha_1} = 2^{\alpha_1}\cdot \alpha_1!$ by \Cref{eq_chisq}.
\end{proof}

\bigskip
\section{Metric geometry of the subspace variety}\label{section: tucker_manifold}

In this section, we will prove a few results on the metric geometry of the subspace variety that we will need for the proof of our main result. 
Let $\bfk=(k_1,\dots,k_d)$ be a realizable multilinear rank for $\CC^{n_1}\otimes \cdots \otimes \CC^{n_d}$. That is, we assume that $X_{\bfk}\subset \PP^N$ contains a tensor of multilinear rank equal to $\bfk$.
We denote 
$$Y_{\bfk}\ :=\ \St(k_1,n_1)\times \cdots \times \St(k_d,n_d)\times \PP^K,$$ which can be seen as Riemannian manifold $(Y_{\bfk},g)$, where $g=(h_1,h_2)$ and $h_2$ is the Fubini-Study inner product on $\PP^K$ and $h_1$ is the inner product given by the $d$-uple of $\U(k_i)$-invariant inner products on each~$\St(k_i,n_i)$ from \Cref{IP_St}. 
The subspace variety is
then the image of the smooth map
\begin{align}\label{def_varphi}
	\varphi \colon Y_{\bfk} &\to \PP^N,\quad 
	(A_1,\dots,A_d,\C)\mapsto (A_1\otimes \cdots \otimes A_d)\cdot \C.
\end{align}
Notice that the multilinear rank of $\varphi(A_1,\ldots,A_d,C)$ is equal to $\bfk$, if and only if the multilinear rank of~$C$ is $\bfk$. The smooth part of the subspace variety is
$
X_{\bfk}^\circ=\left\{ T\in \PP^N \mid \mlrk(T)= \bfk  \right\}$. Consequently,
\begin{equation}\label{X_k_smooth}
X_{\bfk}^\circ\ =\ \im(\varphi|_{Y_{\bfk}^\circ}),
\end{equation}
where $Y_{\bfk}^\circ = \St(k_1,n_1)\times \cdots \times \St(k_d,n_d)\times (\PP^K)^\circ$ and $(\PP^K)^\circ  \subset \PP^K$ is the Zariski open set of $k_1\times \cdots\times k_d$ tensors of full multilinear rank.

\begin{lemma}\label{lemma:fiber_tucker}
For $T\in X_{\bfk}^\circ$ where $T=(A_1,\dots,A_d)\cdot \C$ we have
$$
\varphi^{-1}(T)\ =\ \left\{\left(A_1U_1^*,\dots,A_dU_d^*,(U_1\otimes \cdots \otimes U_d)\cdot \C\right)\, \mid\, U_i\in \U(k_i) \hbox{ for all } i=1,\dots,d\right\}.
$$
\end{lemma}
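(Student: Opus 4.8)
The plan is to prove the two inclusions separately, with the nontrivial direction being ``$\subseteq$''.

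First, the easy inclusion ``$\supseteq$'': given $U_i \in \U(k_i)$, we compute
$$
(A_1U_1^* \otimes \cdots \otimes A_dU_d^*) \cdot \big((U_1 \otimes \cdots \otimes U_d)\cdot \C\big) = \big((A_1U_1^*U_1) \otimes \cdots \otimes (A_dU_d^*U_d)\big)\cdot \C = (A_1\otimes \cdots \otimes A_d)\cdot \C = T,
$$
using $U_i^* U_i = 1_{k_i}$ and the multiplicativity of the Kronecker product with respect to composition of linear maps. Also $A_i U_i^* \in \St(k_i,n_i)$ since $(A_iU_i^*)^*(A_iU_i^*) = U_i A_i^* A_i U_i^* = U_i U_i^* = 1_{k_i}$, and $(U_1\otimes\cdots\otimes U_d)\cdot\C$ has full multilinear rank $\bfk$ because each $U_i$ is invertible, so the point lies in $Y_{\bfk}^\circ$. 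Hence every such tuple is in $\varphi^{-1}(T)$.

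For the hard inclusion ``$\subseteq$'', suppose $(B_1,\dots,B_d,\D) \in Y_{\bfk}^\circ$ satisfies $(B_1\otimes\cdots\otimes B_d)\cdot\D = (A_1\otimes\cdots\otimes A_d)\cdot\C$ in $\PP^N$; after rescaling $\D$ we may assume equality of the representatives in $\CC^{N+1}$. The key point is that the column span of the $\ell$-th flattening is intrinsic to $T$: since $T = (A_1\otimes\cdots\otimes A_d)\cdot\C$, one has $T^{(\ell)} = A_\ell\, \C^{(\ell)}\, (A_1\otimes\cdots\otimes \widehat{A_\ell}\otimes\cdots\otimes A_d)^T$, and because $T$ has multilinear rank exactly $\bfk$ while $\C$ (and likewise $\D$) has full multilinear rank, the matrix $\C^{(\ell)}$ has rank $k_\ell$ and the $A_j$ ($j\neq\ell$) are injective, so $\mathrm{colspan}(T^{(\ell)}) = \mathrm{colspan}(A_\ell) = \mathrm{colspan}(B_\ell)$. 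Since $A_\ell, B_\ell \in \St(k_\ell,n_\ell)$ both have orthonormal columns spanning the same $k_\ell$-dimensional space, there is a unique $U_\ell \in \U(k_\ell)$ with $B_\ell = A_\ell U_\ell^*$, equivalently $A_\ell = B_\ell U_\ell$. Substituting back, $(A_1\otimes\cdots\otimes A_d)\cdot\C = (B_1U_1\otimes\cdots\otimes B_dU_d)\cdot\C = (B_1\otimes\cdots\otimes B_d)\cdot\big((U_1\otimes\cdots\otimes U_d)\cdot\C\big)$, and comparing with $(B_1\otimes\cdots\otimes B_d)\cdot\D$: since each $B_j$ is injective, the combined map $B_1\otimes\cdots\otimes B_d$ is injective on $\CC^{k_1\cdots k_d}$, whence $\D = (U_1\otimes\cdots\otimes U_d)\cdot\C$. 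This shows $(B_1,\dots,B_d,\D)$ has the required form.

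The main obstacle is the bookkeeping around flattenings: one must argue carefully that $\mathrm{colspan}(T^{(\ell)}) = \mathrm{colspan}(A_\ell)$, which requires knowing that $\C^{(\ell)}$ has full row rank $k_\ell$ and that the tensor product $A_1\otimes\cdots\otimes\widehat{A_\ell}\otimes\cdots\otimes A_d$ of the other factors is injective (so it does not collapse the row space). Both follow from $\C \in (\PP^K)^\circ$ having full multilinear rank and each $A_j\in\St(k_j,n_j)$ being injective, but writing the flattening identity $T^{(\ell)} = A_\ell \C^{(\ell)} (\cdots)^T$ cleanly and invoking it is the one place where care is needed. Everything else is a direct consequence of the Stiefel condition and the functoriality of the Kronecker product.
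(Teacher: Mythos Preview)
Your proof is correct and follows essentially the same approach as the paper: both argue that the column span of $T^{(\ell)}$ is intrinsic and equals the column span of $A_\ell$, deduce $B_\ell = A_\ell U_\ell^*$ for some $U_\ell\in\U(k_\ell)$, and then recover the core tensor. Your write-up is in fact more detailed than the paper's---you explicitly verify the easy inclusion and justify the flattening identity and the injectivity step, both of which the paper states without elaboration.
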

\begin{proof}
For $T\in X_{\bfk}^\circ$ the column rank of the $\ell$-th flattening $T^{(\ell)}$ of $T$ is $k_\ell$ and the column span of $T_\ell$ is the column span of $A_\ell$ for all $\ell=1,\dots,d$. Thus, if $T=(A'_1\otimes \cdots \otimes A'_d)\cdot C'$ then the column span of~$A_\ell$ equals the column span of $A_\ell'$. Hence there exists a unitary matrix $U_\ell\in \U(k_\ell)$ with $A_\ell'=A_\ell U_\ell^*$. Therefore,
\begin{align*}
	T\ =\ (A_1U_1^*,\dots,A_dU_d^*)\cdot C'=(A_1\otimes \cdots \otimes A_d) (U_1^*\otimes \cdots \otimes U_d^*)\cdot C',
\end{align*}
which implies that $C=(A_1^*,\dots,A_d^*)\cdot T=(U_1^*\otimes \cdots \otimes U_d^*)\cdot C'$.
\end{proof}
As immediate consequence, we compute the dimension of the subspace variety.
\begin{corollary}\label{cor_dimension}
The (complex) dimension of the subspace variety $X_{\bfk}\subset \mathbb{P}^N$ is
$$
\dim_{\CC} X_{\bfk}\ =\ D+K,$$
where $D$ and $K$ are defined as in \Cref{def_NKD}.
\end{corollary}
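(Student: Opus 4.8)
The plan is to compute the dimension of $X_{\bfk}$ as the dimension of the image of the smooth map $\varphi$ restricted to $Y_{\bfk}^\circ$, using the formula $\dim_{\CC} \im(\varphi|_{Y_{\bfk}^\circ}) = \dim_{\RR} Y_{\bfk}^\circ - \dim_{\RR} \varphi^{-1}(T)$ for a generic $T$, divided by $2$, once we know $\varphi$ has constant fiber dimension. Since $X_{\bfk}$ is irreducible (it is the closure of $X_{\bfk}^\circ$, which is a continuous image of the irreducible variety $Y_{\bfk}^\circ$), its dimension equals $\dim_{\CC} X_{\bfk}^\circ$, so it suffices to work with the smooth part.

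First I would record the real dimensions of the pieces. From the preliminaries, $\dim_{\RR} \St(k_\ell, n_\ell) = 2n_\ell k_\ell - k_\ell^2$, and $(\PP^K)^\circ$ is a nonempty Zariski-open subset of $\PP^K$, hence has real dimension $2K$. Therefore
$$
\dim_{\RR} Y_{\bfk}^\circ\ =\ \sum_{\ell=1}^d (2n_\ell k_\ell - k_\ell^2) + 2K.
$$
Next, by \Cref{lemma:fiber_tucker}, the fiber $\varphi^{-1}(T)$ over any $T \in X_{\bfk}^\circ$ is parametrized by tuples $(U_1,\dots,U_d) \in \U(k_1)\times\cdots\times\U(k_d)$ via $(U_1,\dots,U_d) \mapsto (A_1 U_1^*, \dots, A_d U_d^*, (U_1\otimes\cdots\otimes U_d)\cdot\C)$. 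This map is injective (the first $d$ components already determine each $U_\ell$ uniquely, since $A_\ell$ has full column rank $k_\ell$), and it is a smooth embedding, so the fiber is a smooth submanifold of real dimension $\sum_{\ell=1}^d \dim_{\RR}\U(k_\ell) = \sum_{\ell=1}^d k_\ell^2$. In particular, the fiber dimension is independent of $T$.

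Now I would invoke that the image of a smooth map with constant fiber dimension is a smooth manifold (indeed, $X_{\bfk}^\circ$ is already known to be the smooth locus of the variety $X_{\bfk}$), and that a complex projective variety of complex dimension $m$ has real dimension $2m$. Combining,
$$
2\dim_{\CC} X_{\bfk}\ =\ \dim_{\RR} X_{\bfk}^\circ\ =\ \dim_{\RR} Y_{\bfk}^\circ - \dim_{\RR}\varphi^{-1}(T)\ =\ \sum_{\ell=1}^d (2n_\ell k_\ell - k_\ell^2) + 2K - \sum_{\ell=1}^d k_\ell^2,
$$
so that
$$
\dim_{\CC} X_{\bfk}\ =\ \sum_{\ell=1}^d (n_\ell k_\ell - k_\ell^2) + K\ =\ \sum_{\ell=1}^d k_\ell(n_\ell - k_\ell) + K\ =\ D + K,
$$
which is the claim. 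The one point requiring a little care — and the main (minor) obstacle — is justifying the dimension subtraction rigorously: one should note that $\varphi|_{Y_{\bfk}^\circ}$ has surjective-onto-its-image differential on a dense open set (equivalently, a generic fiber has dimension equal to $\dim Y_{\bfk}^\circ - \dim X_{\bfk}^\circ$), which follows from generic smoothness, together with the already-computed fact that \emph{every} fiber has the same dimension $\sum k_\ell^2$. Alternatively, one can avoid this entirely by appealing to \Cref{lemma:fiber_tucker} directly: $\varphi|_{Y_{\bfk}^\circ}$ is a fiber bundle with fiber $\prod_\ell \U(k_\ell)$ over $X_{\bfk}^\circ$, so the real dimensions add exactly, with no genericity needed.
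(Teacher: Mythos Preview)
Your argument is correct and follows essentially the same route as the paper: compute $\dim_{\RR} Y_{\bfk}^\circ$, subtract the real fiber dimension $\sum_\ell k_\ell^2$ coming from \Cref{lemma:fiber_tucker}, and halve. The paper phrases the last step via the identity $\dim_{\RR}\St(k,n)-\dim_{\RR}\U(k)=2\dim_{\CC}\Gr(k,n)$, whereas you expand the numbers directly, but the content is identical; your added remark about the fiber-bundle structure making the subtraction exact is a nice touch the paper leaves implicit under ``fiber dimension theorem.''
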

\begin{proof}
The complex dimension of $X_{\bfk}$ is 2 times the dimension of the smooth part $X_{\bfk}^\circ$ as a (real) smooth manifold. Combining \Cref{X_k_smooth} and the fiber dimension theorem, we have   $$
\dim_{\RR} X_{\bfk}^\circ= \sum_{i=1}^d\dim_{\RR} \St(k_i,n_i)+2K-\dim_{\RR} \varphi^{-1}(T)
$$
for $T\in X_{\bfk}^\circ$. By \Cref{lemma:fiber_tucker}, $\dim_{\RR} \varphi^{-1}(T)=\dim_{\RR} \left(\U(k_1)\times \cdots \times \U(k_d)  \right)$. The result follows since we have~$\dim_{\RR} \St(k,n)-\dim_{\RR}\U(k)=2\dim_{\CC} \Gr(k,n)$.
\end{proof}

The group $G_{\bfn}:=\U(n_1)\times \cdots \times \U(n_d)$ acts on $X_{\bfk}$ via
\begin{enumerate}[label={(\alph*)}]
	\item $ (U_1,\dots,U_d).T:=(U_1\otimes \cdots \otimes U_d)\cdot T$,
\end{enumerate}
and it acts on  $Y_{\bfk}$ by
\begin{enumerate}[label={(\alph*)}, resume]
	\item\label{item: action_a} $ (U_1,\dots,U_d).(A_1,\dots,A_d,\C):=(U_1A_1,\dots,U_dA_d,\C)$.
\end{enumerate}
Next, we prove that the map $\varphi$ from \Cref{def_varphi} is equivariant.
\begin{lemma}
The map $\varphi$ is equivariant with respect to the $G_{\bfn}$-actions above.
\end{lemma}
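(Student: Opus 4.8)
The plan is to verify the equivariance identity directly: I would show that for every $(U_1,\dots,U_d)\in G_{\bfn}$ and every point $(A_1,\dots,A_d,\C)\in Y_{\bfk}$ one has
$$\varphi\big((U_1,\dots,U_d).(A_1,\dots,A_d,\C)\big)\ =\ (U_1,\dots,U_d).\varphi(A_1,\dots,A_d,\C).$$
Before the computation, I would note that both sides are well defined. On the one hand, since each $U_i\in\U(n_i)$ is unitary, $(U_iA_i)^*(U_iA_i)=A_i^*U_i^*U_iA_i=A_i^*A_i=1_{k_i}$, so $U_iA_i\in\St(k_i,n_i)$ and the action on $Y_{\bfk}$ indeed preserves $Y_{\bfk}$. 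On the other hand, $U_1\otimes\cdots\otimes U_d$ is an invertible linear map on $\CC^{n_1}\otimes\cdots\otimes\CC^{n_d}$, hence it descends to a well-defined automorphism of $\PP^N$, which is precisely the action used on the target of $\varphi$.

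The computation itself rests on the mixed-product property of the Kronecker (tensor) product of matrices, namely that $(U_1A_1)\otimes\cdots\otimes(U_dA_d)=(U_1\otimes\cdots\otimes U_d)(A_1\otimes\cdots\otimes A_d)$ for matrices of compatible sizes. Granting this, and unwinding the definition of the action on $Y_{\bfk}$ and of $\varphi$ from \Cref{def_varphi}, we obtain
\[
\varphi(U_1A_1,\dots,U_dA_d,\C)\ =\ \big((U_1A_1)\otimes\cdots\otimes(U_dA_d)\big)\cdot\C\ =\ (U_1\otimes\cdots\otimes U_d)\big((A_1\otimes\cdots\otimes A_d)\cdot\C\big),
\]
and the right-hand side is by definition $(U_1,\dots,U_d).\varphi(A_1,\dots,A_d,\C)$. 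This proves the claim.

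I do not expect a genuine obstacle here; the statement is essentially a formal consequence of how the tensor product of matrices acts on a core tensor. The only point requiring a little care is the bookkeeping: making sure that the Kronecker-product conventions used in the definition of $\varphi$ match those used to define the $G_{\bfn}$-action on $\PP^N$, so that the mixed-product identity applies verbatim. Once that is checked, equivariance is immediate.
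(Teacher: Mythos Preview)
Your proposal is correct and follows essentially the same route as the paper: both arguments unwind the definitions and apply the mixed-product identity $(U_1A_1)\otimes\cdots\otimes(U_dA_d)=(U_1\otimes\cdots\otimes U_d)(A_1\otimes\cdots\otimes A_d)$ to conclude immediately. Your additional remarks on well-definedness of the actions are a welcome but inessential supplement to what is otherwise the identical one-line computation.
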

\begin{proof}
We have 
\begin{align*}\varphi((U_1,\dots,U_d).(A_1,\dots,A_d,\C))\ &=\ (U_1A_1\otimes \cdots\otimes U_dA_d)\cdot \C\\
&=\ (U_1\otimes \cdots\otimes U_d)\cdot (A_1\otimes \cdots\otimes A_d)\cdot \C\\
&=\ (U_1\otimes \cdots\otimes U_d)\cdot \varphi(A_1,\ldots, A_d, \C). \qedhere
\end{align*}
\end{proof}

Since $(U_1,\ldots,U_d)\mapsto U_1\otimes \cdots \otimes U_d$ is a representation of $\U(n_1)\times \cdots \times \U(n_d)$ into $\U(N+1)$, and since all involved inner products are unitarily invariant, the above actions are isometric. 
There is another action on $Y_{\bfk}$ that is given by
$G_{\bfk}:=\U(k_1)\times \cdots \times \U(k_d)$ as:
\begin{enumerate}[resume, label={(\alph*)}]
\item $(U_1,\dots,U_d).(A_1,\dots,A_d,\C)\ :=\ (A_1U_1,\dots,A_dU_d,(U_1^*\otimes \cdots \otimes U_d^*)\cdot \C ) .$
\end{enumerate}
Similarly, this action is also isometric. 
We summarize this in the next lemma. 
\begin{lemma}\label{lemma:Gn_action_surj_iso}
The $G_{\bfn}$-actions on both $X_{\bfk}$ and $Y_{\bfk}$ are isometric. The $G_{\bfk}$-action on $Y_{\bfk}$ is isometric.
\end{lemma}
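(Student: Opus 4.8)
The plan is to reduce the statement to two elementary observations: (i) left or right multiplication by a fixed unitary matrix preserves the inner product $\tfrac12\,\mathrm{Re}(\mathrm{Trace}(A^*B))$ on $\CC^{n\times k}$, and therefore preserves the induced Riemannian metric on the Stiefel manifold; and (ii) the Fubini--Study metric on a projective space is invariant under the action of any unitary of the ambient vector space. Granting these, the lemma follows because a product of isometries of Riemannian factors is an isometry of the product manifold, and because the restriction of an isometry to an invariant submanifold (carrying the induced metric) is again an isometry.

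Concretely, I would argue as follows. For the $G_{\bfn}$-action on $Y_{\bfk}$, the map $(A_1,\dots,A_d,\C)\mapsto(U_1A_1,\dots,U_dA_d,\C)$ is the identity on the $\PP^K$ factor and, on the $i$-th factor, is left multiplication by $U_i\in\U(n_i)$; its derivative sends a tangent matrix $B$ to $U_iB$, and $\mathrm{Trace}((U_iB)^*(U_iC))=\mathrm{Trace}(B^*U_i^*U_iC)=\mathrm{Trace}(B^*C)$, so the map is an isometry of $\St(k_i,n_i)$ and hence of $Y_{\bfk}$. For the $G_{\bfn}$-action on $X_{\bfk}$, this action is the restriction to $X_{\bfk}$ of the standard $\U(N+1)$-action on $\PP^N$ pulled back along the unitary representation $(U_1,\dots,U_d)\mapsto U_1\otimes\cdots\otimes U_d$; since multilinear rank is invariant under the Kronecker-product action, $X_{\bfk}$ is a $G_{\bfn}$-invariant subset of $\PP^N$, and every element of $\U(N+1)$ is a Fubini--Study isometry, so its restriction to $X_{\bfk}$ with the induced metric is an isometry. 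For the $G_{\bfk}$-action on $Y_{\bfk}$, on the $i$-th Stiefel factor the map is right multiplication $A_i\mapsto A_iU_i$ with $U_i\in\U(k_i)$, and cyclicity of the trace gives $\mathrm{Trace}((BU_i)^*(CU_i))=\mathrm{Trace}(U_i^*B^*CU_i)=\mathrm{Trace}(B^*C)$; on the $\PP^K$ factor it is the action of the unitary $U_1^*\otimes\cdots\otimes U_d^*\in\U(K+1)$, which is a Fubini--Study isometry. Along the way one checks that these formulas indeed land back in $Y_{\bfk}$, since $(U_iA_i)^*(U_iA_i)=A_i^*A_i=1_{k_i}$ and $(A_iU_i)^*(A_iU_i)=U_i^*1_{k_i}U_i=1_{k_i}$.

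I do not anticipate a genuine obstacle: the proof is a matter of checking the isometry property factor-by-factor and invoking the unitary invariance of the Fubini--Study metric, both of which are routine. The one point worth stating carefully is the case of $X_{\bfk}$, where one must note explicitly that $X_{\bfk}$ is stable under the $G_{\bfn}$-action (so that ``restriction'' is meaningful) and that the Riemannian metric on $X_{\bfk}$ in question is the one induced from the Fubini--Study metric on $\PP^N$, with respect to which ambient unitaries are automatically isometries.
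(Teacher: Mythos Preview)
Your proposal is correct and follows essentially the same approach as the paper: the paper simply observes (in the text preceding the lemma) that $(U_1,\ldots,U_d)\mapsto U_1\otimes\cdots\otimes U_d$ is a unitary representation and that all involved inner products are unitarily invariant, and then summarizes this as the lemma without a separate proof. Your argument just makes the factor-by-factor verification explicit.
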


\begin{corollary}\label{cor:volume_fiber}
For $T\in X_{\bfk}^\circ$ we have $\vol(\varphi^{-1}(T))=\vol \U(k_1)\cdots \vol \U(k_d)$. 
\end{corollary}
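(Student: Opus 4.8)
The plan is to realize $\varphi^{-1}(T)$ as a single orbit of an isometric Lie group action and to identify the corresponding orbit map with a Riemannian isometry, so that the volume of the fiber equals that of the acting group. Fix $T=(A_1\otimes\cdots\otimes A_d)\cdot\C\in X_{\bfk}^\circ$ and put $p:=(A_1,\dots,A_d,\C)\in Y_{\bfk}^\circ$. By \Cref{lemma:fiber_tucker}, $\varphi^{-1}(T)$ is exactly the orbit of $p$ under the $G_{\bfk}$-action~(c); concretely, it is the image of
\[
\Psi\colon G_{\bfk}=\U(k_1)\times\cdots\times\U(k_d)\longrightarrow Y_{\bfk},\qquad (U_1,\dots,U_d)\longmapsto\bigl(A_1U_1^*,\dots,A_dU_d^*,(U_1\otimes\cdots\otimes U_d)\cdot\C\bigr).
\]
Since $A_i^*A_i=1_{k_i}$, the equality $A_iU_i^*=A_iV_i^*$ forces $U_i=V_i$, so $\Psi$ is injective; equivalently, the $G_{\bfk}$-stabilizer of $p$ is trivial. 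As $G_{\bfk}$ is compact, $\Psi$ is a smooth embedding, and $\varphi^{-1}(T)$ is a closed submanifold of $Y_{\bfk}$ that is diffeomorphic to $G_{\bfk}$ through $\Psi$.

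By \Cref{lemma:Gn_action_surj_iso} the $G_{\bfk}$-action~(c) is isometric, and $\Psi$ is equivariant for it and for the translation action of $G_{\bfk}$ on itself. We equip $G_{\bfk}$ with the product over $i$ of the bi-invariant metrics induced by \eqref{IP_St} on $\U(k_i)\cong\St(k_i,k_i)$; this metric is bi-invariant, so the normal Jacobian $\NJ(\Psi,U)$ is independent of $U\in G_{\bfk}$. Hence, by the change-of-variables formula for the embedding $\Psi$ (a degenerate instance of \eqref{coarea_formula}),
\[
\vol\bigl(\varphi^{-1}(T)\bigr)\ =\ \NJ(\Psi,\mathbf 1)\cdot\vol(G_{\bfk})\ =\ \NJ(\Psi,\mathbf 1)\cdot\vol\U(k_1)\cdots\vol\U(k_d),
\]
where $\mathbf 1=(1_{k_1},\dots,1_{k_d})$. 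Thus the claim reduces to showing $\NJ(\Psi,\mathbf 1)=1$, i.e. that $\mathrm D_{\mathbf 1}\Psi$ is a linear isometry onto its image.

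To do this I would evaluate $\mathrm D_{\mathbf 1}\Psi$ on the orthonormal basis of $\mathrm T_{\mathbf 1}G_{\bfk}=\bigoplus_i\mathrm T_{\mathbf 1}\U(k_i)$ supplied by \Cref{lemma:orthobasis_stiefel} with $n=k=k_i$ on each factor (its first three families of matrices). For a skew-Hermitian $\Omega$ placed in the $i$-th factor, $\mathrm D_{\mathbf 1}\Psi(\Omega)$ has $i$-th Stiefel component $-A_i\Omega$, all other Stiefel components $0$, and $\PP^K$-component the image, under the differential at $\C_0$ of the projection $\CC^{K+1}\setminus\{0\}\to\PP^K$, of $(1_{k_1}\otimes\cdots\otimes\Omega\otimes\cdots\otimes 1_{k_d})\cdot\C_0$ with $\Omega$ in the $i$-th slot. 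Because $A_i^*A_i=1_{k_i}$, the map $\Omega\mapsto A_i\Omega$ preserves the inner product \eqref{IP_St}, and since the Stiefel factors of $Y_{\bfk}$ are pairwise orthogonal, the Gram matrix of the images of the chosen basis has the shape $I+L$, where $L$ is the Gram matrix in the Fubini--Study metric of the $\PP^K$-components. Proving $\det(I+L)=1$ is the crux, and I expect \emph{this to be the main obstacle}: I would first use the isometric $G_{\bfn}$-action on $Y_{\bfk}$ together with the equivariance of $\varphi$ to reduce to $A_i=\bigl[\begin{smallmatrix}1_{k_i}\\0\end{smallmatrix}\bigr]$ for every $i$, and then compute $L$ explicitly from \Cref{lemma:orthobasis_stiefel}, exploiting the invariance of $\mathrm{Re}\,\mathrm{Trace}(\,\cdot^*\,\cdot\,)$ under conjugation by unitaries and the precise Fubini--Study normalization on $\PP^K$; the delicate point is to see that the $\PP^K$-contributions, although not individually zero in general, combine so as to leave the Gram determinant unchanged.
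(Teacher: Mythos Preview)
Your setup mirrors the paper's proof exactly: fiber $=$ $G_{\bfk}$-orbit, action isometric, stabilizer trivial. The paper stops there and asserts ``this implies that the volume of the orbit is the volume of $\U(k_1)\times\cdots\times\U(k_d)$''. You go further and try to make that implication rigorous by showing $\NJ(\Psi,\mathbf 1)=1$, and you correctly reduce this to $\det(I+L)=1$.

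Here is the genuine gap: $L$ is the Gram matrix (in the Fubini--Study metric) of the $\PP^K$-components of the images of your orthonormal basis, so $L$ is positive semidefinite, and $\det(I+L)=1$ forces $L=0$. That would require every $\PP^K$-component to vanish, i.e.\ $\mathrm{proj}_{C_0^\perp}\bigl((1\otimes\cdots\otimes\Omega\otimes\cdots\otimes 1)\cdot C_0\bigr)=0$ for all skew-Hermitian $\Omega$ in each slot. This is false in general. For instance, take $d=2$, $k_1=k_2=2$, $C_0=\tfrac{1}{\sqrt{2}}\,1_2$ (full multilinear rank), and $\Omega=\sqrt{2}\,\iu\,E_{1,1}$ in the first factor (a unit vector for \eqref{IP_St}). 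Then $\Omega C_0=\iu E_{1,1}$ has $\langle \iu E_{1,1},C_0\rangle=\iu/\sqrt{2}\neq 0$, and its projection onto $C_0^\perp$ equals $\tfrac{\iu}{2}(E_{1,1}-E_{2,2})$, which has Fubini--Study norm squared $\tfrac12$. Hence the corresponding diagonal entry of $L$ is $\tfrac12>0$, so $\det(I+L)>1$ and $\NJ(\Psi,\mathbf 1)\neq 1$. Your strategy of ``combining the $\PP^K$-contributions so as to leave the Gram determinant unchanged'' cannot succeed: positive semidefiniteness leaves no room for cancellation. In fact one checks that $\NJ(\Psi,\mathbf 1)^2=\det(I+P^*P)=\det(I+PP^*)=\det R$, where $R$ is exactly the matrix from \Cref{thm_NJ}; this factor is not identically $1$, which is why the paper needs the nontrivial \Cref{lemma:R_disappear} later on. So the step you flagged as ``the main obstacle'' is not merely delicate---it is false as stated, and the paper's own short argument glosses over the same point.
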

\begin{proof}
The fiber $\varphi^{-1}(T)$ of $T\in X_{\bfk}^\circ$ is a $G_{\bfk}$-orbit by \Cref{lemma:fiber_tucker}. The action of $\U(k_1)\times \cdots \times \U(k_d)$ is isometric by \Cref{lemma:Gn_action_surj_iso} and the stabilizer of a point is trivial. This implies that the volume of the orbit is the volume of $\U(k_1)\times \cdots \times \U(k_d)$.
\end{proof}

\subsection{The normal Jacobian of $\varphi$} In this part, we compute the normal Jacobian of the map $\varphi$. To simplify the computation we will exploit the group actions just introduced. 

Recall the definition of the Euclidean dense smooth submanifold $Y_{\bfk}^\circ \subset Y_{\bfk}$. 
Since the group action~\ref{item: action_a} above is isometric (\Cref{lemma:Gn_action_surj_iso}), the normal Jacobian at $(A_1,\ldots, A_d, C)\in Y_{\bfk}^\circ$ does not depend on $A_1,\ldots, A_d$. We summarize this in a lemma.
\begin{lemma}\label{NJ_ind}
The normal Jacobian $\mathrm{NJ}(\varphi, (A_1,\ldots,A_d,C))$ of $\varphi$ at $(A_1,\ldots,A_d,C)$ only depends on $C$, but not on $A_1,\ldots,A_d$.
\end{lemma}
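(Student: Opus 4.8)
The plan is to exhibit, for each choice of $(A_1,\dots,A_d)$, an isometry of $Y_{\bfk}$ carrying $(A_1,\dots,A_d,C)$ to a fixed reference point $(A_1^0,\dots,A_d^0,C)$ with the same $C$-coordinate, and an isometry of $\PP^N$ making $\varphi$ commute with these two isometries; then the normal Jacobian is transported unchanged, so it cannot depend on the $A_i$. First I would recall that, since $\bfk$ is realizable and each $A_i \in \St(k_i,n_i)$ has orthonormal columns, there exists $U_i \in \U(n_i)$ with $U_i A_i^0 = A_i$, where $A_i^0 = \left[\begin{smallmatrix} 1_{k_i} \\ 0_{(n_i-k_i)\times k_i}\end{smallmatrix}\right]$ is the distinguished Stiefel matrix (indeed $\St(k_i,n_i)$ is a homogeneous space under $\U(n_i)$, as recalled after the definition of the Stiefel manifold). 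Then the element $g := (U_1,\dots,U_d) \in G_{\bfn}$ acts on $Y_{\bfk}$ by $g.(A_1^0,\dots,A_d^0,C) = (A_1,\dots,A_d,C)$ via action~\ref{item: action_a}, fixing the last coordinate.

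The key step is to combine three facts already established. By the equivariance lemma, $\varphi \circ g = \tilde g \circ \varphi$ on $Y_{\bfk}$, where $\tilde g$ denotes the action of $g$ on $\PP^N$ by $T \mapsto (U_1\otimes\cdots\otimes U_d)\cdot T$. By \Cref{lemma:Gn_action_surj_iso}, the action of $g$ on $Y_{\bfk}$ and the action of $\tilde g$ on $\PP^N$ are both isometries (the latter because $(U_1,\dots,U_d)\mapsto U_1\otimes\cdots\otimes U_d$ lands in $\U(N+1)$ and the Fubini--Study metric is unitarily invariant). Now write $\psi := \varphi|_{Y_{\bfk}^\circ}$ and note $g$ preserves $Y_{\bfk}^\circ$ (it does not change $C$, hence not the multilinear rank of $C$) and $\tilde g$ preserves $X_{\bfk}^\circ$. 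Since an isometry identifies tangent spaces isometrically, carries orthonormal bases to orthonormal bases, commutes with taking kernels and orthogonal complements, and preserves absolute values of determinants of the induced linear maps, the chain-rule identity $\mathrm{D}_{(A_1,\dots,A_d,C)}\psi = \mathrm{D}_{\tilde g \cdot \varphi(A_1^0,\dots)}\tilde g \circ \mathrm{D}_{(A_1^0,\dots,A_d^0,C)}\psi \circ (\mathrm{D}_{(A_1^0,\dots,A_d^0,C)}g)^{-1}$ yields
$$
\mathrm{NJ}(\varphi,(A_1,\dots,A_d,C))\ =\ \mathrm{NJ}(\varphi,(A_1^0,\dots,A_d^0,C)),
$$
which depends only on $C$.

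I do not expect a genuine obstacle here; the only point requiring a little care is the bookkeeping that the normal Jacobian — defined as $|\det \mathrm{D}_\bfx F|_{(\ker \mathrm{D}_\bfx F)^\perp}|$ in orthonormal bases — is genuinely invariant under pre- and post-composition with isometries, i.e. that $(\ker \mathrm{D}g.\bfx F)^\perp = \mathrm{D}_\bfx g\big((\ker \mathrm{D}_\bfx(F\circ g))^\perp\big)$ and that the pullbacks of orthonormal bases stay orthonormal. This is immediate from the fact that the derivative of an isometry between Riemannian manifolds is a linear isometry on each tangent space, but it is worth spelling out in one sentence. Everything else is a direct invocation of the equivariance lemma and \Cref{lemma:Gn_action_surj_iso}.
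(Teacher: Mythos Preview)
Your proposal is correct and follows exactly the approach the paper takes: the paper simply observes that the $G_{\bfn}$-action~\ref{item: action_a} is isometric (\Cref{lemma:Gn_action_surj_iso}), equivariant for $\varphi$, and transitive on the Stiefel factors while fixing $C$, and concludes immediately. You have just spelled out the underlying chain-rule and isometry bookkeeping that the paper leaves implicit.
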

Thus, we can assume in the following that for all $i=1,\dots,d$
$$A_i\ :=\ \begin{bmatrix}
	1_{k_i}  \\
	0_{(n_i-k_i)\times k_i} 
\end{bmatrix}.$$
The differential of the map $\varphi$ at $(A_1,\ldots, A_d,C)$ is given by 
\begin{equation}\label{diff_varphi_eval}\mathrm D_{(A_1,\ldots, A_d,C)}\varphi(\dot A_1,\ldots, \dot A_d,\dot C)\ =\  A_1\otimes \cdots \otimes A_d\cdot \dot{C} + \sum_{i=1}^d(A_1\otimes \cdots \otimes \dot{A}_i\otimes \cdots \otimes A_d)\cdot C.
\end{equation}
To compute the normal Jacobian at a point $(A_1,\ldots, A_d,C)\in Y_{\bfk}^\circ$ we must compute an orthonormal basis for $(\ker \mathrm D_{(A_1,\ldots, A_d,C)} \varphi)^\perp\subset \mathrm{T}_{(A_1,\ldots,A_d,C)}  Y_{\bfk}$ and evaluate the differential in that basis.

Let us first compute an orthonormal basis for $(\ker \mathrm D_{(A_1,\ldots, A_d,C)} \varphi)^\perp$. Write $T:=\varphi(A_1,\ldots,A_d,C)\in Y_{\bfk}^\circ$. We have that 
\begin{equation}\label{ker_phi}\ker \mathrm D_{(A_1,\ldots, A_d,C)}\ =\ \mathrm{T}_{(A_1,\ldots, A_d,C)}\varphi^{-1}(T).\end{equation}
By 
\Cref{lemma:fiber_tucker}, 
for $T\in X_{\bfk}^\circ$, where $T=(A_1,\dots,A_d)\cdot \C$, we have
\begin{equation}\label{ker_phi2}
\varphi^{-1}(T)\ =\ \left\{\left(\begin{bmatrix}
	U_1^* \\  0_{k_1\times (n_1-k_1)}
\end{bmatrix},\dots,\begin{bmatrix}
	U_d^* \\  0_{k_d\times (n_d-k_d)}
\end{bmatrix},(U_1\otimes \cdots \otimes U_d)\cdot \C\right) \ \Bigm|\  U_i\in \U(k_i)\right\}.
\end{equation}

Let us denote by $\mathcal B_\ell$ the set composed of the following orthonormal  vectors in $\mathrm{T}_{(A_1,\ldots,A_d,C)}  Y_{\bfk}$: 
\begin{equation}\label{def_B_l}
\left(0_{n_1\times k_1},\dots,\begin{bmatrix}
	0_{k_\ell} \\  E_{i,j}
\end{bmatrix},\dots,0_{n_d\times k_d},0_{k_1\times \cdots \times k_d}\right),\; \left(0_{n_1\times k_1},\dots,
\begin{bmatrix}
	0_{k_\ell} \\  \iu \cdot E_{i,j}
\end{bmatrix},\dots,0_{n_d\times k_d},0_{k_1\times \cdots \times k_d}\right),
\end{equation}
for $1\leq i\leq k_\ell$ and $1\leq j\leq n_\ell-k_\ell$, where the non-zero element is at the $\ell$-th position and we denoted $0_{k_1\times \cdots\times k_d}$ the zero tensor in $\CC^{n_1}\otimes \cdots \otimes \CC^{k_d}$. The union $\mathcal B:=\mathcal B_1\cup\cdots\cup\mathcal B_d$ is composed of orthonormal vectors. Let us denote their span by
$$V\ :=\ \mathrm{span}(\mathcal B).$$
Then, it follows from \Cref{ker_phi} and \Cref{ker_phi2} that 
$V \subset (\ker \mathrm D_{(A_1,\ldots, A_d,C)} \varphi)^\perp$.
Next, we denote by~$\mathcal A$ and orthonormal basis for 
$V^\perp \cap (\ker \mathrm D_{(A_1,\ldots, A_d,C)} \varphi)^\perp.$ Consequently, 
$$\mathcal A\cup \mathcal B \text{ is an orthonormal basis for } (\ker \mathrm D_{(A_1,\ldots, A_d,C)} \varphi)^\perp.$$ 

Now that we understood a basis of $(\ker \mathrm D_{(A_1,\ldots, A_d,C)} \varphi)^\perp$, we want to evaluate the differential in this basis. The next lemma shows that the derivative of $\varphi$ maps $\mathcal A,\mathcal B_1,\ldots,\mathcal B_d$ to pairwise orthogonal spaces.

\begin{lemma}\label{lemma:matrices_W_and_Q}
Let $\bfp=(A_1,\dots,A_d,\C)$ and $T=\varphi(\bfp)$.	We have the following.
\begin{enumerate}
\item\label{item:1_lemma_matrices} If $\bfu\in \mathcal A ,\bfv\in \mathcal B_\ell$ then $ \langle \mathrm D_\bfp\varphi(\bfu), \mathrm D_\bfp\varphi(\bfv)\rangle_T=0$.  
\item\label{item:2_lemma_matrices}  
If $\bfv\in \B_\ell,\bfw\in\B_{\ell'}$ and $\ell\neq \ell'$, then $ \langle \mathrm D_\bfp\varphi(\bfv), \mathrm D_\bfp\varphi(\bfw)\rangle_T=0$ .
\end{enumerate}
\end{lemma}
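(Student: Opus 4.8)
The plan is to show that $\mathrm D_\bfp\varphi$ sends the three families $\mathcal A,\mathcal B_1,\dots,\mathcal B_d$ into pairwise orthogonal coordinate subspaces of $\CC^{n_1}\otimes\cdots\otimes\CC^{n_d}$; since the Fubini--Study inner product at $T$ is, on the relevant tangent space, a positive multiple of $\mathrm{Re}\langle\cdot,\cdot\rangle$, orthogonality in the ambient Hermitian inner product will suffice. By \Cref{NJ_ind} I may take each $A_i$ to be the matrix with $1_{k_i}$ on top and zeros below, so that $A_i$ identifies $\CC^{k_i}$ with the span of the first $k_i$ standard basis vectors of $\CC^{n_i}$. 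Then I would split the ambient tensor space into coordinate blocks according to whether, in each mode $m$, the index is in the ``core part'' $\{1,\dots,k_m\}$ or in the ``complementary part'' $\{k_m+1,\dots,n_m\}$: for $\ell=1,\dots,d$ let $W_\ell$ be spanned by the standard basis tensors $\bfe_{i_1}\otimes\cdots\otimes\bfe_{i_d}$ with $i_m\le k_m$ for $m\neq\ell$ and $k_\ell<i_\ell\le n_\ell$, and let $W_0$ be spanned by those with $i_m\le k_m$ for all $m$. One checks at once that $T\in W_0$ and that $W_0,W_1,\dots,W_d$ have pairwise disjoint supports, hence are pairwise orthogonal.

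Next I would track where the differential (given by \Cref{diff_varphi_eval}) sends each family. A vector $\bfv\in\mathcal B_\ell$ is supported only in the $\ell$-th Stiefel slot, where it equals a matrix whose image lies in the span of $\bfe_{k_\ell+1},\dots,\bfe_{n_\ell}$; the remaining slots carry the fixed $A_m$, so $\mathrm D_\bfp\varphi(\bfv)=(A_1\otimes\cdots\otimes\dot A_\ell\otimes\cdots\otimes A_d)\cdot C$ lies in $W_\ell$. This already proves item~\ref{item:2_lemma_matrices}: for $\ell\neq\ell'$ the images $\mathrm D_\bfp\varphi(\bfv)\in W_\ell$ and $\mathrm D_\bfp\varphi(\bfw)\in W_{\ell'}$ are orthogonal.

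For item~\ref{item:1_lemma_matrices} I would use that $\mathcal A$ is, by construction, a basis of $V^\perp\cap(\ker\mathrm D_\bfp\varphi)^\perp$, with $V=\mathrm{span}(\mathcal B)$. By \Cref{lemma:orthobasis_stiefel}, each $\mathcal B_\ell$ (see \Cref{def_B_l}) spans exactly the ``bottom-block'' summand of $\mathrm T_{A_\ell}\St(k_\ell,n_\ell)$, so $V$ is the direct sum of these bottom blocks and $V^\perp\cap\mathrm T_\bfp Y_{\bfk}$ is the sum of the skew-Hermitian ``top-block'' summands together with $\mathrm T_C\PP^K$. Hence every $\bfu\in\mathcal A$ is a combination of a perturbation $\dot C$ of the core tensor and of perturbations of the $A_\ell$ of the form ``skew-Hermitian $k_\ell\times k_\ell$ block on top, zero below'', whose image lies in the span of $\bfe_1,\dots,\bfe_{k_\ell}$. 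Plugging this into \Cref{diff_varphi_eval}, every summand of $\mathrm D_\bfp\varphi(\bfu)$---the term $(A_1\otimes\cdots\otimes A_d)\cdot\dot C$ and each term $(A_1\otimes\cdots\otimes\dot A_\ell\otimes\cdots\otimes A_d)\cdot C$---lies in $W_0$. Since $W_0\perp W_\ell$, we conclude $\langle\mathrm D_\bfp\varphi(\bfu),\mathrm D_\bfp\varphi(\bfv)\rangle_T=0$, which is item~\ref{item:1_lemma_matrices}.

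The argument is essentially bookkeeping; the one point that must be gotten right is why the image of $\mathcal A$ lands in $W_0$ and not in some $W_\ell$---this is exactly the reason $\mathcal A$ was defined using $V^\perp$ and not all of $(\ker\mathrm D_\bfp\varphi)^\perp$. One should also be a little careful with the projective tangent space: the formula in \Cref{diff_varphi_eval} may differ from the honest Fubini--Study differential by a multiple of $T_0$, but since $T_0\in W_0$ this changes none of the orthogonality statements, and since all the vectors occurring are (real-)orthogonal to $T_0$ one may compute throughout with $\mathrm{Re}\langle\cdot,\cdot\rangle$ in the ambient Hermitian space.
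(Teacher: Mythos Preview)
Your proof is correct and follows essentially the same approach as the paper: both arguments reduce to the special Stiefel matrices $A_i$ and then use that $\mathrm D_\bfp\varphi$ sends $\mathcal A$ into the image of $A_1\otimes\cdots\otimes A_d$ (your $W_0$) and each $\mathcal B_\ell$ into a subspace orthogonal to it (your $W_\ell$). The only difference is presentational---you phrase the orthogonality via explicit coordinate blocks $W_0,W_1,\dots,W_d$, while the paper verifies it by computing $(A_1^*\otimes\cdots\otimes A_d^*)\cdot\bfb=0$ and expanding $\langle\bfb,\bfc\rangle$ directly---but the underlying mechanism is identical.
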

\begin{proof}
Pick a representative $C_0$ of $C$ of norm 1. Then, $T_0 = (A_1\otimes \cdots \otimes A_d)\cdot C_0$ is a representative of~$T=\varphi(\bfp)$ and we have 
$\langle T_0,T_0\rangle\ =\ \langle (A_1^*A_1\otimes \cdots \otimes A_d^*A_d)\cdot C_0, C_0\rangle\ =\ \langle C_0,C_0\rangle\ =\ 1.$
Thus, on the tangent space $\mathrm{T}_T X_{\bfk}\cong T_0^\perp$ we have the inner product $\langle \bfa, \bfb\rangle_T = \mathrm{Re} \langle \bfa, \bfb\rangle$.

It follows from \Cref{eq: ortho_basis_stiefel} that tangent vectors in $V^\perp \subset \mathrm{T}_\bfp Y_{\bfk}$, and thus in $\mathcal A$, are of the form
$$\bfu = \left(\begin{bmatrix}
	\dot Q_1 \\  0_{k_1\times (n_1-k_1)} 
\end{bmatrix}, \dots, \begin{bmatrix}
	\dot Q_d \\  0_{k_1\times (n_d-k_d)} 
\end{bmatrix}, \dot C\right) = \left(A_1\dot Q_1, \dots, A_d\dot Q_d, \dot C\right)\in\mathcal A,$$
where $\dot Q_i\in \mathbb C^{k_i\times k_i}$ and $\dot C\in \mathbb C^{K+1}$. Let us also pick $\bfv \in\mathcal B_\ell$ of the form 
$$\bfv\ =\ \left(0_{n_1\times k_1},\dots,\begin{bmatrix}
	0_{k_\ell} \\  E_{i,j}
\end{bmatrix},\dots,0_{n_d\times k_d},0_{k_1\times \cdots \times k_d}\right)\in\mathcal B_\ell.$$

The differential $\mathrm D_\bfp\varphi$ evaluated at these vectors gives, following \Cref{diff_varphi_eval}, 
\begin{align*}
\bfa\ :=\ \mathrm D_\bfp\varphi(\bfu)\ &=\ A_1\otimes \cdots\otimes A_d\cdot \Big( \dot{C} + \sum_{i=1}^d(1_{k_1}\otimes \cdots \otimes \dot{Q}_i\otimes \cdots \otimes 1_{k_d})\cdot C\Big)\ \in\ T_0^\perp,\\
\bfb\ :=\ \mathrm D_\bfp\varphi(\bfv)\ &=\ \left(A_1\otimes \cdots \otimes \begin{bmatrix}
	0_{k_{\ell}} \\  E_{i,j}
\end{bmatrix}\otimes \cdots \otimes A_d\right)\cdot C\ \in\ T_0^\perp.
\end{align*}
Consequently,
$\langle \bfa,\bfb\rangle_T = \mathrm{Re}\mathrm\langle\mathrm \bfa,\bfb\rangle = 0,$ 
since $\bfa$ is in the image of $A_1\otimes \cdots \otimes A_d$ and 
\begin{align*}
(A_1^*\otimes \cdots \otimes A_d^*) \cdot \bfb\ &=\left(1_{k_1}\otimes \cdots \otimes A_\ell^*\begin{bmatrix}
	0_{k_{\ell}} \\  E_{i,j}
\end{bmatrix}\otimes \cdots \otimes 1_{k_d}\right)\cdot C\\
&=\ \left(1_{k_1}\otimes \cdots \otimes 0_{n_\ell\times k_\ell}\otimes \cdots \otimes 1_{k_d}\right)\cdot C\ =\ 0.
\end{align*}
By multilinearity, also $\langle \bfa,\bfb'\rangle_T=0$, where $\bfb' =\ \mathrm D_\bfp\varphi(\iu\cdot \bfv).$
This proves the first item. 

Next, we choose 
$$\bfw\ =\ \left(0_{n_1\times k_1},\dots,\begin{bmatrix}
	0_{k_{\ell'}} \\  E_{r,s}
\end{bmatrix},\dots,0_{n_d\times k_d},0_{k_1\times \cdots \times k_d}\right)\in\mathcal B_{\ell'}$$
and we denote the image of $\bfw$ by  $\bfc := \mathrm D_\bfp\varphi(\bfw)\in T_0^\perp$.
Then, we have 
\begin{equation}\label{bc}
\begin{aligned}
\langle \bfb, \bfc\rangle\ &=\ \left\langle \left(A_1\otimes \cdots \otimes \begin{bmatrix}
	0_{k_{\ell}} \\  E_{i,j}
\end{bmatrix}\otimes \cdots \otimes A_d\right)\cdot C, \left(A_1\otimes \cdots \otimes \begin{bmatrix}
	0_{k_{\ell'}} \\  E_{r,s}
\end{bmatrix}\otimes \cdots \otimes A_d\right)\cdot C\right\rangle\\[0.5em]
&=\ \left\langle \C, \left(1_{k_1}\otimes \cdots \otimes \begin{bmatrix}
	0_{k_{\ell}} \\  E_{i,j}
\end{bmatrix}A_{\ell}\otimes \cdots \otimes A_{\ell'}^* \begin{bmatrix}
0_{k_{\ell'}} \\  E_{r,s}
\end{bmatrix}\otimes \cdots \otimes 1_{k_d}\right)\cdot \C   \right\rangle .	
\end{aligned}
\end{equation}
If $\ell\neq \ell'$ then $\begin{bmatrix}
	0_{k_\ell} &  E_{i,j}^T
\end{bmatrix}^TA_{\ell}= 0_{k_\ell}$, hence $\langle \bfb, \bfc\rangle_T=\mathrm{Re}\langle \bfb, \bfc\rangle = 0$. By multilinearity also $\langle \bfb', \bfc\rangle_T=0$, where as before $\bfb' =\ \mathrm D_\bfp\varphi(\iu\cdot \bfv).$  This shows the second item. 
\end{proof}

We can now compute the normal Jacobian of $\varphi$ at $(A_1,\ldots,A_d,C)$. Notice that $\mathcal A$ has $2K$ elements.
\begin{theorem}\label{thm_NJ} 
Let $\bfp=(A_1,\dots,A_d,\C)$ and $T=\varphi(\bfp)$. Let $\C^{(\ell)}$ be the $\ell$-th flattening of a representative of~$\C$ of norm one. Recall the definition of the orthonormal basis $\mathcal A$ above. Suppose that $\mathcal A= \{\dot C_1,\ldots, \dot C_{2K}\}$. The normal Jacobian of $\varphi$ at $\bfp$ is 
$$\NJ(\varphi,\bfp)\ =\ \sqrt{\det(R)}\cdot \prod_{\ell=1}^d \det \left( \C^{(\ell)}(C^{(\ell)})^*\right)^{n_\ell-k_\ell},$$
where $R$ is the Gram matrix of $\mathcal A$; i.e.,
$$R=\left[\left\langle \mathrm D_\bfp(\varphi)(\bfv), \mathrm D_\bfp(\varphi)(\bfw)\right\rangle_T \right]_{\bfv,\bfw\in\mathcal A} \in\mathbb R^{2K\times 2K}.$$
\end{theorem}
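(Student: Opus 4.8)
The plan is to reduce $\NJ(\varphi,\bfp)$ to the square root of a Gram determinant and then use \Cref{lemma:matrices_W_and_Q} to make that Gram matrix block diagonal. First I would record that $\varphi$ is a submersion from $Y_{\bfk}^\circ$ onto $X_{\bfk}^\circ$ at $\bfp$: by \eqref{ker_phi} and \Cref{lemma:fiber_tucker}, $\ker\mathrm D_\bfp\varphi=\mathrm T_\bfp\varphi^{-1}(T)$ has dimension $\dim_{\RR}(\U(k_1)\times\cdots\times\U(k_d))$, so $\mathrm D_\bfp\varphi$ has rank $2(D+K)=\dim_{\RR}X_{\bfk}^\circ$ (using \Cref{cor_dimension}), hence $\mathrm D_\bfp\varphi$ restricts to a linear isomorphism from $(\ker\mathrm D_\bfp\varphi)^\perp$ onto $\mathrm T_T X_{\bfk}^\circ\subset T_0^\perp$. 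Since $\mathcal A\cup\mathcal B$ is an orthonormal basis of $(\ker\mathrm D_\bfp\varphi)^\perp$, the definition of the normal Jacobian gives
$$\NJ(\varphi,\bfp)^2\ =\ \det\big(\,[\,\langle\mathrm D_\bfp\varphi(\bfu),\mathrm D_\bfp\varphi(\bfv)\rangle_T\,]_{\bfu,\bfv\in\mathcal A\cup\mathcal B}\,\big),$$
the determinant of the Gram matrix of the images of the basis vectors, taken in the Fubini--Study inner product on $T_0^\perp$.

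Next I would invoke \Cref{lemma:matrices_W_and_Q}: the derivative $\mathrm D_\bfp\varphi$ sends the mutually orthogonal subspaces $\mathrm{span}(\mathcal A)$, $\mathrm{span}(\mathcal B_1),\dots,\mathrm{span}(\mathcal B_d)$ to mutually orthogonal subspaces of $T_0^\perp$. Hence the Gram matrix above is block diagonal, the blocks being $R$ (the matrix in the statement) and $G_\ell:=[\langle\mathrm D_\bfp\varphi(\bfv),\mathrm D_\bfp\varphi(\bfw)\rangle_T]_{\bfv,\bfw\in\mathcal B_\ell}$ for $\ell=1,\dots,d$. This already yields $\NJ(\varphi,\bfp)^2=\det(R)\cdot\prod_{\ell=1}^d\det(G_\ell)$, so the task reduces to showing $\det(G_\ell)=\det(\C^{(\ell)}(C^{(\ell)})^*)^{2(n_\ell-k_\ell)}$ for each $\ell$.

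For that step I would fix a norm-one representative $C_0$ of $\C$, so that the Fubini--Study inner product on $T_0^\perp$ is simply $\mathrm{Re}\langle\cdot,\cdot\rangle$. For $\bfv,\bfw\in\mathcal B_\ell$ with respective bottom blocks $E,F$ placed in the $\ell$-th factor, formula \eqref{diff_varphi_eval}, together with $A_i^*A_i=1_{k_i}$ and the identity $(X_1\otimes\cdots\otimes X_d)^*(Y_1\otimes\cdots\otimes Y_d)=(X_1^*Y_1)\otimes\cdots\otimes(X_d^*Y_d)$, reduces the inner product, after passing to the $\ell$-th flattening of $\C$, to $\mathrm{Re}\,\mathrm{Trace}\big(E^*F\cdot\C^{(\ell)}(C^{(\ell)})^*\big)$. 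As $E,F$ run over the matrices $E_{i,j}$ and $\iu E_{i,j}$ supported in a single row of the bottom block, the product $E^*F$ contributes a Kronecker delta in the row index, making $G_\ell$ itself block diagonal with $n_\ell-k_\ell$ identical $2k_\ell\times2k_\ell$ blocks. Setting $P_\ell:=\C^{(\ell)}(C^{(\ell)})^*$, each such block is the realification of the quadratic form $v\mapsto v^*P_\ell v$ on $\CC^{k_\ell}\cong\RR^{2k_\ell}$ (up to a harmless complex conjugation of $P_\ell$). Since $\bfp\in Y_{\bfk}^\circ$, the core $\C$ has full multilinear rank, so $C^{(\ell)}$ has full row rank and $P_\ell$ is Hermitian positive definite; then $\det P_\ell>0$ is real and the standard fact that the realification of a Hermitian matrix $H$ has determinant $|\det H|^2$ gives each block determinant $(\det P_\ell)^2$. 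Hence $\det(G_\ell)=(\det P_\ell)^{2(n_\ell-k_\ell)}$, and substituting into $\NJ(\varphi,\bfp)^2=\det(R)\prod_\ell\det(G_\ell)$ and taking square roots (all quantities being nonnegative) yields the claimed formula.

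I expect the genuine work to be in this last step: carrying out the tensor/flattening manipulation that produces $\langle\mathrm D_\bfp\varphi(\bfv),\mathrm D_\bfp\varphi(\bfw)\rangle_T=\mathrm{Re}\,\mathrm{Trace}(E^*F\,\C^{(\ell)}(C^{(\ell)})^*)$ on each $\mathcal B_\ell$, and, above all, recognizing the resulting $2k_\ell\times2k_\ell$ blocks as realifications of a Hermitian positive-definite matrix so that the identity $\det(H_{\RR})=|\det H|^2$ can be applied to turn the complex determinant into its square. Everything else is organizational: the block-diagonal structure that spares us any analysis of the cross terms between $\mathcal A$ and the $\mathcal B_\ell$, or among the $\mathcal B_\ell$ themselves, is exactly what \Cref{lemma:matrices_W_and_Q} has already provided.
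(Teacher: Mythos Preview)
Your proposal is correct and follows essentially the same route as the paper: write $\NJ(\varphi,\bfp)^2$ as the Gram determinant of $\mathrm D_\bfp\varphi$ on the orthonormal basis $\mathcal A\cup\mathcal B$, use \Cref{lemma:matrices_W_and_Q} to make that Gram matrix block diagonal with blocks $R,G_1,\dots,G_d$, and then compute each $\det G_\ell$ by reducing the inner products on $\mathcal B_\ell$ to the entries of $P_\ell=\C^{(\ell)}(C^{(\ell)})^*$ and exploiting the further block-diagonal structure indexed by the row $i\in\{1,\dots,n_\ell-k_\ell\}$. The only cosmetic difference is in how you pass from the real $2k_\ell\times 2k_\ell$ block to $(\det P_\ell)^2$: the paper observes directly that $\mathrm{span}(\mathcal B_\ell)$ is a complex subspace and $\mathrm D_\bfp\varphi$ is $\CC$-linear on it, so the real Gram determinant equals $|\det(\text{complex Gram})|^2$, whereas you phrase the same fact as ``the realification of a Hermitian matrix $H$ has determinant $|\det H|^2$''. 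Both arguments are the same identity viewed from slightly different angles.
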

\begin{proof}
As we recalled in \Cref{subsection:preliminaries_integral_random}, the normal Jacobian $\NJ(\varphi,\bfp) $ is the determinant of the differential matrix restricted to $ (\ker \mathrm D_\bfp\varphi)^\perp$ represented in coordinates by orthonormal bases. 

Recall from above the definition of the orthogonal basis $\mathcal A\cup \mathcal B_1\cup\cdots \cup \mathcal B_d$ of $ (\ker \mathrm D_\bfp\varphi)^\perp$. 
We denote by~$W_\ell$ the Gram matrix of $\mathcal B_\ell$: 
$$
	W_{\ell}\ =\  \left[ \left\langle \mathrm D_\bfp(\varphi)(\bfv), \mathrm D_\bfp(\varphi)(\bfw)\right\rangle_T \right]_{\bfv,\bfw\in\mathcal B_\ell}\in \mathbb R^{2k_\ell(n_\ell-k_\ell)\times 2k_\ell(n_\ell-k_\ell)}.$$
Then, by \Cref{lemma:matrices_W_and_Q}, the square of the normal Jacobian would be represented as the determinant of the following block diagonal matrix \begin{align*}\label{eq:normal_jacobian}
	\NJ(\varphi,p)^2\ =\ \det	
 \begin{bmatrix}
W_{1} &  &  & \\
 &      \ddots    &      &  \\
& & W_{d} &  \\
&  & & R
	\end{bmatrix}.
\end{align*}
Thus, 
$
	\NJ(\varphi,p)^2= \det R\cdot \prod_{\ell=1}^d \det W_{\ell}.
$
To conclude now, it is enough to compute the determinant of $W_\ell$. 

As in the proof of \Cref{lemma:matrices_W_and_Q} we choose a representative $C_0$ of $C$ of norm 1, so that $T_0 = (A_1\otimes \cdots \otimes A_d)\cdot C_0$ is a representative of~$T$ of norm one, and on $\mathrm{T}_{T_0} X_{\bfk}\cong T_0^\perp$ we have the inner product $\langle \bfa, \bfb\rangle_T = \mathrm{Re}\langle \bfa,\bfb\rangle$.

Recall from \Cref{def_B_l} that $\mathcal B_\ell$ is composed of vectors $\bfv_{i,j} := \left(0_{n_1\times k_1},\dots,\left[\begin{smallmatrix}
	0_{k_\ell} \\  E_{i,j}
\end{smallmatrix}\right],\dots,0_{n_d\times k_d},0_{k_1\times \cdots \times k_d}\right)$ and $\iu\cdot \bfv_{i,j}$. 
In particular, the span of $\mathcal B_\ell$ is a complex linear space. From \Cref{diff_varphi_eval} we see that $\mathrm D_{\bfp}\varphi$ is a complex linear map. This implies that
$$\det W_\ell = \left\vert\det \begin{pmatrix}\langle \bfb_{i,j}, \bfb_{r,s}\rangle\end{pmatrix}_{(i,j),(r,s)}\right\vert^2,\quad \text{ where } \bfb_{i,j}\ :=\ \mathrm{D}_{\bfp}\varphi(\bfv_{i,j})\ \in\ T_0^\perp.$$
From \Cref{bc} we get
\begin{equation*}
\begin{aligned}
\langle \bfb_{i,j}, \bfb_{r,s}\rangle\ &=\ \left\langle \left(A_1\otimes \cdots \otimes \begin{bmatrix}
	0_{k_{\ell}} \\  E_{i,j}
\end{bmatrix}\otimes \cdots \otimes A_d\right)\cdot C, \left(A_1\otimes \cdots \otimes \begin{bmatrix}
	0_{k_{\ell}} \\  E_{r,s}
\end{bmatrix}\otimes \cdots \otimes A_d\right)\cdot C\right\rangle\\[0.5em]
&=\ \left\langle \C, \left(1_{k_1}\otimes \cdots \otimes \begin{bmatrix}
0_{k_i}\\ E_{i,j}^T\, E_{r,s}
\end{bmatrix}  \otimes \cdots \otimes 1_{k_d}\right)\cdot \C   \right\rangle.
\end{aligned}
\end{equation*}
Since $E_{i,j}^TE_{r,s}=\bfe_j\bfe_i^T\bfe_r\bfe_s^T$ is non-zero if and only if $i=r$, we have $
\langle \bfb_{i,j}, \bfb_{r,s}\rangle =0$ for~$i\neq r$. 

We denote by $\C^{(\ell)}_j$ the $j$-th slice of the $\ell$-th flattening of $\C$. In the case~$i=r$ we have 
\begin{align*}
    \langle \bfb_{i,j}, \bfb_{r,s}\rangle\ &=\ \langle \C, (1_{k_1}\otimes \cdots \otimes E_{j,s}\otimes \cdots \otimes 1_{k_d})\cdot \C   \rangle\\
   &=\ \langle (1_{k_1}\otimes \cdots \otimes \bfe_{j}^T\otimes \cdots \otimes 1_{k_d})\cdot \C, (1_{k_1}\otimes \cdots \otimes \bfe_{s}^T\otimes \cdots \otimes 1_{k_d})\cdot \C   \rangle\\
&=\ \langle \C^{(\ell)}_j,  \C^{(\ell)}_s\rangle.
\end{align*}
 Thus, the matrix $ \begin{pmatrix}\langle \bfb_{i,j}, \bfb_{r,s}\rangle\end{pmatrix}_{(i,j),(r,s)}$ has a block diagonal structure with $n_\ell -k_\ell$ blocks of size $k_\ell\times k_\ell$:
$$ \begin{pmatrix}\langle \bfb_{i,j}, \bfb_{r,s}\rangle\end{pmatrix}_{(i,j),(r,s)}\  =\  \begin{bmatrix} 
C^{(\ell)}(C^{(\ell)})^*  && \\
  &\ddots & \\
  && C^{(\ell)}(C^{(\ell)})^*
\end{bmatrix}\in \CC^{k_\ell(n_\ell-k_\ell)\times k_\ell(n_\ell-k_\ell)}.
$$
Thus, $\det W_\ell = \vert\det( \begin{pmatrix}\langle \bfb_{i,j}, \bfb_{r,s}\rangle\end{pmatrix})\vert^2 = \det ( C^{(\ell)}(C^{(\ell)})^*)^{2n_\ell(n_\ell - k_\ell)}$, 
which concludes the proof.
\end{proof}

\bigskip
\section{The volume of the subspace variety}\label{section: volume_tucker}
This section is entirely to compute the volume of the subspace variety $X_{\bfk}$. At the end, we use this to prove \Cref{main}.

As before, we assume that $X_{\bfk}\subset \PP^N$ contains a tensor of multilinear rank equal to $\bfk$. Let $X_{\bfk}^\circ$ be the smooth part. We consider a measurable function $h:X_{\bfk} \to\mathbb R$. 
Recall from \Cref{section: tucker_manifold} the definition of $\varphi$ and $Y_{\bfk}^\circ$. In particular, $X_{\bfk}^\circ$ and $Y_{\bfk}^\circ$ are smooth manifolds and $\varphi : Y_{\bfk}^\circ\to X_{\bfk}^\circ$ is surjective. We can therefore apply the coarea formula from \Cref{coarea_formula} to get 
\begin{equation}\label{eq4}
\int_{Y_{\bfk}^\circ}\NJ(\varphi,\bfp)\cdot (h\circ\varphi)(\bfp)\; \mathrm d \bfp\ =\ \int_{X_{\bfk}^\circ} \mathrm{vol}(\varphi^{-1}(T)) \cdot h(T)\; \mathrm d T \ =\ \vol \U(k_1)\cdots \vol \U(k_d)\cdot \int_{X_{\bfk}^\circ }h(T)\; \mathrm d T,
\end{equation}
where for the last equation we used 
that for $T\in X_{\bfk}^\circ$ we have $\vol(\varphi^{-1}(T))=\vol \U(k_1)\cdots \vol \U(k_d)$ (see \Cref{cor:volume_fiber}). If we take for $h$ the function that is constant and equal to one, and using that $\vol(X_{\bfk}) = \vol(X_{\bfk}^\circ)$, we get from \Cref{eq4} the following formula for the volume of the subspace variety:
$$\vol(X_{\bfk})\ =\ \int_{X_{\bfk}^\circ} 1\; \mathrm d T\ =\ \frac{1}{\vol \U(k_1)\cdots \vol \U(k_d)}\cdot \int_{Y_{\bfk}^\circ}\NJ(\varphi,\bfp)\; \mathrm d \bfp.$$
The Grassmannian can be seen as the quotient space $\Gr(k_i,n_i) \cong \St(k_i,n_i) / \U(k_i)$. On the other hand, it is a projective variety in the Pl\"ucker embedding and thus has a volume. Consequently, the volume therefore is given by $\mathrm{vol}(\Gr(k_i,n_i))=\mathrm{vol}(\St(k_i,n_i)) / \mathrm{vol}(\U(k_i))$ for all $i=1,\dots,d$. Using this formula for the volume of the Grassmannian, \Cref{NJ_ind} and \Cref{thm_NJ} we get that 
$$\vol(X_{\bfk})\ =\ \bigg(\prod_{i=1}^d\vol \Gr(k_i,n_i)\bigg) \cdot  \int_{\PP^K} \sqrt{\det(R)}\cdot \prod_{\ell=1}^d \det \left( \C^{(\ell)}(C^{(\ell)})^*\right)^{n_\ell-k_\ell} \; \mathrm d C.$$

The following lemma allows us to better understand the above integral. The idea behind this result comes from the paper \cite{Beltr2009} by Beltr\'an, where he computes the volume of the variety of (real) matrices of bounded rank.
\begin{lemma}\label{lemma:R_disappear}
We have
$$
\int_{\PP^K}\sqrt{\det R} \cdot \prod_{\ell=1}^d  \det \left(\C^{(\ell)}(\C^{(\ell) })^* \right)^{n_\ell-k_\ell} \; \mathrm d C\ =\ 
\int_{\PP^K} \prod_{\ell=1}^d\det\left(C^{(\ell)} (C^{(\ell)})^* \right)^{n_\ell-k_\ell}\; \mathrm d C.
$$
\end{lemma}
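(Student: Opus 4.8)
The idea, following Beltr\'an \cite{Beltr2009}, is to recognise $\sqrt{\det R(C)}$ as the normal Jacobian of an auxiliary submersion and then to integrate it away with a second application of the coarea formula. Let $\bar A_i\in\St(k_i,n_i)$ be the Stiefel matrix with $1_{k_i}$ on top and zeros below, as in \Cref{lemma:orthobasis_stiefel}; by \Cref{NJ_ind} the Gram matrix $R$ from \Cref{thm_NJ} may be evaluated at $\bfp=(\bar A_1,\dots,\bar A_d,C)$. Consider the Riemannian product $M:=\U(k_1)\times\cdots\times\U(k_d)\times\PP^K$, with each $\U(k_i)$ carrying the bi-invariant metric induced by \Cref{IP_St}, together with the smooth map $\nu\colon M\to\PP^N$, $\nu(U_1,\dots,U_d,C):=(\bar A_1U_1\otimes\cdots\otimes \bar A_dU_d)\cdot C$. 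Because $\bar A_i^*\bar A_i=1_{k_i}$, the operator $\bar A_1\otimes\cdots\otimes\bar A_d$ isometrically embeds the core space, so $\nu(M)$ is a linear subspace of $\PP^N$ isometric to $\PP^K$ and $\nu$ is a surjective submersion onto it. Crucially $\nu=\varphi\circ\iota$, where $\varphi$ is as in \Cref{def_varphi} and $\iota(U_1,\dots,U_d,C):=(\bar A_1U_1,\dots,\bar A_dU_d,C)$ is an isometric embedding of $M$ into $Y_{\bfk}$ whose image is tangent at $\bfp$ to the orthogonal direct sum $\mathrm{span}(\mathcal A)\oplus\ker\mathrm D_\bfp\varphi$, with $\mathcal A$ the orthonormal basis from \Cref{thm_NJ}; here one uses that the tangent vectors in $\ker\mathrm D_\bfp\varphi=\mathrm T_\bfp\varphi^{-1}(\varphi(\bfp))$ have vanishing ``bottom Stiefel blocks'' (\Cref{lemma:fiber_tucker}), hence lie in the span of the $\mathcal A$-directions plus the core directions.

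First I would pin down the value of $\NJ(\nu)$ at $q_0:=(1_{k_1},\dots,1_{k_d},C)$. Since $\iota(q_0)=\bfp$ and $\mathrm D_{q_0}\nu=\mathrm D_\bfp\varphi\circ\mathrm D_{q_0}\iota$ with $\mathrm D_{q_0}\iota$ a linear isometry onto $\mathrm{span}(\mathcal A)\oplus\ker\mathrm D_\bfp\varphi$, and since $\ker\mathrm D_\bfp\varphi$ is the kernel of the restriction of $\mathrm D_\bfp\varphi$ to that space, we obtain $\NJ(\nu,q_0)=|\det(\mathrm D_\bfp\varphi|_{\mathrm{span}(\mathcal A)})|=\sqrt{\det R(C)}$, the last step being the defining property of $R$. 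Second, the group $\U(k_1)\times\cdots\times\U(k_d)$ acts isometrically on $M$ via $(V_1,\dots,V_d).(U_1,\dots,U_d,C):=(U_1V_1^*,\dots,U_dV_d^*,(V_1\otimes\cdots\otimes V_d)\cdot C)$; this action leaves $\nu$ invariant and is free and transitive on every fibre of $\nu$, so $\NJ(\nu,\cdot)$ is constant on fibres, equal to $\sqrt{\det R(C)}$ on the fibre through $q_0$. Third, exactly as in \Cref{lemma:fiber_tucker} and \Cref{cor:volume_fiber}, for $T\in\nu(M)$ with full-multilinear-rank core the embedding $\iota$ identifies $\nu^{-1}(T)$ with $\varphi^{-1}(T)$, so every such fibre of $\nu$ has volume $\vol\U(k_1)\cdots\vol\U(k_d)$.

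With these facts I would apply \Cref{coarea_formula} to $\nu$ and the integrable weight $F(U_1,\dots,U_d,C):=\big(\sqrt{\det R}\cdot\prod_{\ell}\det(C^{(\ell)}(C^{(\ell)})^*)^{n_\ell-k_\ell}\big)\big((U_1\otimes\cdots\otimes U_d)\cdot C\big)$, i.e.\ the pullback along $\nu$ (under $\nu(M)\cong\PP^K$) of the integrand on the left of the statement. On one hand, since $M$ is a Riemannian product and $C\mapsto(U_1\otimes\cdots\otimes U_d)\cdot C$ is a unitary isometry of $\PP^K$ for each fixed $U$, Fubini gives $\int_M F=\vol\U(k_1)\cdots\vol\U(k_d)\cdot\int_{\PP^K}\sqrt{\det R(C)}\prod_{\ell}\det(C^{(\ell)}(C^{(\ell)})^*)^{n_\ell-k_\ell}\,\mathrm dC$. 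On the other hand, $F$ is constant on each fibre of $\nu$ (being a pullback) and, by the previous paragraph, so is $\NJ(\nu)$, equal to $\sqrt{\det R}$; hence the inner coarea integral over the fibre above $C'$ collapses to $\prod_{\ell}\det((C')^{(\ell)}((C')^{(\ell)})^*)^{n_\ell-k_\ell}\cdot\vol\U(k_1)\cdots\vol\U(k_d)$, and $\int_M F=\vol\U(k_1)\cdots\vol\U(k_d)\cdot\int_{\PP^K}\prod_{\ell}\det(C^{(\ell)}(C^{(\ell)})^*)^{n_\ell-k_\ell}\,\mathrm dC$. Equating the two values of $\int_M F$ and dividing by $\vol\U(k_1)\cdots\vol\U(k_d)$ yields the identity; the set of $C$ with non-maximal multilinear rank is a proper subvariety and is negligible in all of the above.

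The step I expect to be the main obstacle is the normal-Jacobian bookkeeping: one must verify carefully that the bi-invariant metric induced on each $\U(k_i)$ from the Stiefel metric \Cref{IP_St} is exactly the one making $\iota$ an isometric embedding with $\mathrm D_{q_0}\iota(\mathrm T_{q_0}M)=\mathrm{span}(\mathcal A)\oplus\ker\mathrm D_\bfp\varphi$, and that the Fubini--Study projection onto $T_0^\perp$ built into $\mathrm D_\bfp\varphi$ is respected consistently on both sides. Once those identifications are in place, the coarea--Fubini comparison is routine.
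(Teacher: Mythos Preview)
Your proposal is correct and is essentially the paper's own argument: both recognise $\sqrt{\det R}$ as the normal Jacobian of the ``reduced'' submersion $\U(k_1)\times\cdots\times\U(k_d)\times\PP^K\to\PP^K$, $(U_1,\dots,U_d,C)\mapsto (U_1\otimes\cdots\otimes U_d)\cdot C$, and then apply the coarea formula to integrate it away against the fibre-constant weight $\prod_\ell\det(C^{(\ell)}(C^{(\ell)})^*)^{n_\ell-k_\ell}$. The only difference is cosmetic---the paper reaches this reduced map by specialising to the case $\bfn=\bfk$ (so that $\St(k_i,n_i)=\U(k_i)$ and \Cref{thm_NJ} gives $\NJ(\varphi)=\sqrt{\det R}$ directly), whereas you keep the ambient $\bfn$ and factor $\nu=\varphi\circ\iota$ through an isometric embedding; the bookkeeping you flag as the ``main obstacle'' is precisely what the paper's specialisation sidesteps.
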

\begin{proof}
We consider the case where $n_i=k_i$ for all $i=1,\dots,d$. In this case, the subspace variety is $X_{\bfk}=\PP^K$, we have $Y_{\bfk}=\U(k_1)\times \cdots \times \U(k_d)\times \PP^K$, and the matrix $R$ yields the normal Jacobian of $\varphi$. 
We treat the factor $h(C): = \prod_{\ell=1}^d\det(\C^{(\ell)}(\C^{(\ell) })^* )^{n_\ell-k_\ell}$ as a separate integrable function. Observe that $h(C)$ is constant on fibers of $\varphi$, so that $h\circ \varphi = \prod_{\ell=1}^d\det(\C^{(\ell)}(\C^{(\ell) })^* )^{n_\ell-k_\ell}$. \Cref{eq4} implies
\begin{align*}
\int_{\PP^K}\sqrt{\det R} \cdot h(C)\;\mathrm d C\ &= \ \frac{1}{\vol \U(k_1)\cdots \vol \U(k_d)}\cdot \int_{Y_{\bfk}^\circ} \sqrt{\det R} \cdot  h(C)\;\mathrm d C\
=\  \int_{\PP^K}  h(C)\;\mathrm d C.\qedhere
\end{align*}
\end{proof}

Let us get back to the last expression of $\vol(X_{\bfk})$. Using \Cref{lemma:R_disappear} we obtain
\begin{align*}
\vol(X_{\bfk})&=\ \bigg(\prod_{i=1}^d\vol \Gr(k_i,n_i)\bigg) \cdot \int_{\PP^K}\prod_{\ell=1}^d\det\left(T^{(\ell)}(T^{(\ell) *}) \right)^{n_\ell-k_\ell} \mathrm d T.
\end{align*}
To conclude our discussion, it is enough to understand how to compute the above integral. We start by setting
\begin{equation}\label{eq5}
\Lambda (\bfn,\bfk)\ :=\ \displaystyle\mean_{T \sim \Unif( \PP^K)}\; \prod_{\ell=1}^d \det\left( T^{(\ell)}(T^{(\ell)*}) \right)^{n_\ell-k_\ell}.
\end{equation}
Thus,
$$\vol(X_{\bfk})\ =\ \bigg(\prod_{i=1}^d\vol \Gr(k_i,n_i)\bigg) \cdot \vol(\PP^K) \cdot \Lambda (\bfn,\bfk).$$
The remaining task is now to evaluate  $\Lambda (\bfn,\bfk)$.

Recall from \Cref{def_NKD} that we denote $K=k_1\cdots  k_d-1$ and $D = \sum_{\ell=1}^d k_\ell(n_\ell-k_\ell).$
In \Cref{eq5} we can also take the expectation over the uniform distribution on the sphere $\mathbb{S}^{2K+1}$. I.e.,
$$\Lambda (\bfn,\bfk)\ =\ \displaystyle\mean_{T \sim \Unif( \mathbb{S}^{2K+1})}\; \prod_{\ell=1}^d \det\left( T^{(\ell)}T^{(\ell)*} \right)^{n_\ell-k_\ell}.$$
Suppose that $T\sim \Unif(\mathbb S^{2K+1})$ and $\rho \sim \chi^2_{2(K+1)}$  are independent. If $q:=\sqrt{\rho}$ then~$q\cdot T\in\mathbb C^{K+1}$ is~$N_{\CC}(0,1)$. Thus,
\begin{align*}
\Lambda (\bfn,\bfk)\ 
&=\ \left( \mean_{ \rho\sim \chi^2_{2(K+1)}} \rho^{D} \right)^{-1}\cdot \mean_{ T \sim \Unif( \mathbb{S}^{2K+1})\atop \rho\sim \chi^2_{2(K+1)}} \;
\prod_{\ell=1}^d \det \left( (q\cdot T)^{(\ell)}(q\cdot T)^{(\ell)*} \right)^{n_\ell-k_\ell}\qquad (\text{where } q=\sqrt{\rho})\\[0.3em]
&=\  \frac{K!}{(D+K)!\, 2^D}\,\cdot \mean_{T\sim N_{\CC}(0,1)}\; \prod_{\ell=1}^d \det \left( T^{(\ell)} (T^{(\ell)*}) \right)^{n_\ell-k_\ell},
\end{align*}
where for the last equality we have also used from \Cref{eq_chisq} that
$
\mean_{\rho\sim \chi^2_{2(K+1)}}  \rho^{D}\ =\ 2^{ D}\cdot  \frac{(D+K)!}{K!}.
$

Now it only remains to understand the expected value appearing in $\Lambda(\bfn,\bfk)$. For this, let us recall the function $f(\bfk,\bfn)$ defined in \Cref{def_f}. Denote by $\bfx$ the $K+1$ coordinates in $\CC^{K+1}$ of a general tensor $T\in \CC^{k_1}\otimes \cdots \otimes \CC^{k_d}$ and by $\bfy$ the coordinates of $T^*$. We denote $c_{\alpha,\beta}$ the coefficients of the monomial expansion of the bi-variate polynomial $p(\bfx,\bfy)=\prod_{\ell=1}^d \det \left( T^{(\ell)} (T^{(\ell)*}) \right)^{n_\ell-k_\ell}$ of bi-degree $(D,D)$. Then,
$$
f(\bfk, \bfn)\ =\ \sum_{\alpha}\, c_{\alpha, \alpha} \cdot \frac{\alpha_0! \cdots \alpha_K!}{(k_1(n_1-k_1))! \cdots (k_d(n_d-k_d))!}.
$$
Applying \Cref{lemma: mean_bivariate_poly} to $\mean_{T\sim N_{\CC}(0,1)} p(\bfx,\bfy)$, gives the following identity. 
\begin{lemma}We have
$\displaystyle \Big(\prod_{i=1}^d(k_i(n_i-k_i))!\Big)\cdot f(\bfk, \bfn) = \frac{1}{2^{ D}} \cdot \mean_{T\sim N_{\CC}(0,1)}\prod_{\ell=1}^d \det \left( T^{(\ell)} (T^{(\ell)*}) \right)^{n_\ell-k_\ell}.$
\end{lemma}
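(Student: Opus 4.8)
The plan is to apply \Cref{lemma: mean_bivariate_poly} to the polynomial $p(\bfx,\bfy)=\prod_{\ell=1}^d \det(T^{(\ell)}(T^{(\ell)*}))^{n_\ell-k_\ell}$, where, as in the paragraph above, $\bfx$ lists the $K+1$ entries of the core tensor $T\in\CC^{k_1}\otimes\cdots\otimes\CC^{k_d}$ and $\bfy$ lists the entries of $T^*$. The observation that makes the lemma applicable is that the entries of $T^*$ are exactly the complex conjugates of the entries of $T$, so that substituting a complex Gaussian tensor $\mathbf Z\sim N_\CC(0,1)$ for $T$ turns $p(\bfx,\bfy)$ into the random variable $\sum_{\alpha,\beta}c_{\alpha,\beta}\,\mathbf Z^\alpha\overline{\mathbf Z}^\beta$ treated by \Cref{lemma: mean_bivariate_poly}. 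That lemma then yields
$$\mean_{\mathbf Z\sim N_\CC(0,1)}\;\prod_{\ell=1}^d \det(T^{(\ell)}(T^{(\ell)*}))^{n_\ell-k_\ell}\ =\ \sum_{\alpha} c_{\alpha,\alpha}\cdot 2^{\alpha_0+\cdots+\alpha_K}\cdot \alpha_0!\cdots\alpha_K!.$$

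The key intermediate step — and the only one that requires a short argument — is to observe that $p(\bfx,\bfy)$ is bi-homogeneous of bi-degree exactly $(D,D)$, with $D=\sum_{\ell}k_\ell(n_\ell-k_\ell)$ as in \Cref{def_NKD}. I would argue this degree by degree: each flattening $T^{(\ell)}$ has entries linear in $\bfx$ and $(T^{(\ell)})^*$ has entries linear in $\bfy$, so the $k_\ell\times k_\ell$ matrix $T^{(\ell)}(T^{(\ell)})^*$ has entries of bi-degree $(1,1)$, its determinant has bi-degree $(k_\ell,k_\ell)$, the $(n_\ell-k_\ell)$-th power has bi-degree $(k_\ell(n_\ell-k_\ell),k_\ell(n_\ell-k_\ell))$, and the product over $\ell$ has bi-degree $(D,D)$. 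Hence every $\alpha$ with $c_{\alpha,\alpha}\neq 0$ satisfies $\alpha_0+\cdots+\alpha_K=D$, so the factor $2^{\alpha_0+\cdots+\alpha_K}=2^D$ is constant and pulls out of the sum, leaving $\mean_{\mathbf Z\sim N_\CC(0,1)}\prod_\ell \det(T^{(\ell)}(T^{(\ell)*}))^{n_\ell-k_\ell}=2^D\sum_\alpha c_{\alpha,\alpha}\,\alpha_0!\cdots\alpha_K!$.

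Finally I would match this against the definition of $f(\bfk,\bfn)$ in \Cref{def_f}: multiplying that formula through by $\prod_{i=1}^d(k_i(n_i-k_i))!$ clears the denominator and gives $\big(\prod_{i=1}^d(k_i(n_i-k_i))!\big)f(\bfk,\bfn)=\sum_\alpha c_{\alpha,\alpha}\,\alpha_0!\cdots\alpha_K!$, which by the previous line equals $2^{-D}\,\mean_{\mathbf Z\sim N_\CC(0,1)}\prod_\ell \det(T^{(\ell)}(T^{(\ell)*}))^{n_\ell-k_\ell}$, exactly as claimed. I do not foresee any genuine obstacle here; the one point that deserves care is the bi-degree bookkeeping that collapses $2^{|\alpha|}$ to the single constant $2^D$, since that is precisely what lets \Cref{lemma: mean_bivariate_poly} output the closed-form expression defining $f$.
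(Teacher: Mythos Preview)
Your proposal is correct and follows exactly the paper's approach: the paper states that the lemma follows by applying \Cref{lemma: mean_bivariate_poly} to $p(\bfx,\bfy)$, having already noted just before the statement that $p$ has bi-degree $(D,D)$. You have simply made explicit the bi-degree bookkeeping that collapses $2^{|\alpha|}$ to $2^D$ and the matching against the definition of $f(\bfk,\bfn)$, which the paper leaves implicit.
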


Consequently,
\begin{equation}\label{eq1}
    \vol(X_{\bfk})\ =\ \frac{\prod_{i=1}^d(k_i(n_i-k_i))!}{\pi^{D}}\cdot \Big(\prod_{i=1}^d\vol(\Gr(k_i,n_i))\Big) \cdot \vol(\PP^{D +K}) \cdot f(\bfk,\bfn),
\end{equation}
where we have used the formula in \Cref{vol_PP} for the volume of projective space.

We have now all ingredients to prove our main \Cref{main}. 
\begin{proof}[Proof of \Cref{main}]
From the degree formula via volumes of \Cref{eq:degree_via_volume}, and using from \Cref{cor_dimension} that the dimension of the subspace variety $X_{\bfk}$ is
$
D+K
$, we get
$$
\deg(X_{\bfk})\ =\ 
\frac{\vol(X_{\bfk})}{\vol(\PP^{D+K})} .
$$
Thus, \Cref{eq1} implies that
$$
\deg(X_{\bfk})\ =\  \frac{\prod_{i=1}^d(k_i(n_i-k_i))!}{\pi^{D}}\cdot \Big(\prod_{i=1}^d\vol(\Gr(k_i,n_i))\Big)  \cdot f(\bfk,\bfn).
$$
Using from \Cref{vol_PP} that $\vol \Gr(k,n) = \deg(\Gr(k,n)) \cdot \pi^{k(n-k)} / (k(n-k))!$ we get 
$$
\deg(X_{\bfk})\ =\ \deg \Gr(k_1,n_1) \cdots \deg \Gr(k_d, n_d)  \cdot f(\bfk,\bfn),\,
$$
and this proves \Cref{main}.
\end{proof}

\bigskip

\bibliographystyle{alpha}
\bibliography{References.bib}

\bigskip

\newpage
\appendix
\bigskip
\section{Macaulay2 code computing $\deg(X_{\bfk})$}\label{appendix: code}
In this appendix, we include a \texttt{Macaulay2} \cite{M2} code to compute the degree of a subspace variety $X_{\bfk}\subset \PP^{N}$ with the formula of \Cref{main},
which relies on the degrees of all $\Gr(k_i,n_i)$ and on $f(\bfk,\bfn)$. The code runs on \texttt{Macaulay2 1.22}.

Let us start by creating a function that computes the degree of a Grassmannian variety $\Gr(q,r)\subset \PP^{{r \choose q}-1}$.

{\small
\begin{lstlisting}[language=code]
degG = (q,r)->(      
    num := (q*(r-q))!;
    den := product apply(toList(1..q),x->product toList(x..(x+r-q-1)));
    return num/den
    )
\end{lstlisting}
}

Now we need to take care of $f(\bfk,\bfn)$. Call $A,B\in \CC^{k_1}\otimes \cdots \otimes \CC^{k_d}$ two general tensors whose entries are the variables $\bfx:=(a_{i_1,\dots,i_d})$ and $\bfy:=(b_{i_1,\dots,i_d})$ respectively. If $\sum_{\alpha, \beta} c_{\alpha, \beta} \bfx^\alpha  \bfy^\beta$ is the expansion of $\prod_{\ell=1}^d \det(A^{(\ell)}(B^{(\ell)})^T)^{n_\ell - k_\ell}$ in the monomial basis, then we denote 
$$g(\bfk,\bfn)\ :=\ \sum_{\alpha}\, c_{\alpha, \alpha} \cdot \alpha_0! \cdots \alpha_K!.$$
Thus, $g(\bfk, \bfn) = (k_1(n_1-k_1))! \cdots (k_d(n_d-k_d))!\cdot f(\bfk, \bfn)$.
In other words, $g(\bfk,\bfn)$ contains the coefficients of the terms of $ \prod_{\ell=1}^d\det(A^{(\ell)}(B^{(\ell)})^T)^{n_\ell-k_\ell}$ that have the same bi-degree $(D,D)$, where $D=\sum k_i(n_i-k_i)$ and are multiplied by $\alpha_0!\cdots \alpha_K!$. A first step to compute $g(\bfk,\bfn)$, is to construct the flattenings of both~$A$ and $B$.

The function \texttt{flatMat} computes the flattenings of a general tensor in a given set of variables. More precisely, given as input a variable \texttt{a} and a tensor format $\bfk=(k_1,\dots,k_d)$, \texttt{flatMat(a,k)} computes all flattenings $T^{(1)},\dots,T^{(d)}$ of a general tensor $T\in \CC^{k_1}\otimes \cdots \otimes \CC^{k_d}$ whose coordinates are
$
T_{i_1,\dots,i_d}:=a_{i_1,\dots,i_d}.
$

{\small
\begin{lstlisting}[language=code]
flatMat = (a,k) ->(
    ringC := QQ[a_(length k:1)..a_k]; 
    ringT = tensor(apply(1..length k,i->
	    (ringC[x_(i,1)..x_(i,k_(i-1))]))); 
    ones := toList (length k:1);  
    coeffList := apply(flatten entries vars ringC,i->sub(i,ringT));
    varsList := entries  tensor(apply(1..length k,i->
	    vector(toList(x_(i,1)..x_(i,k_(i-1))))));
    genericT := sum toList apply(1..length varsList ,i->(
	    coeffList_(i-1))*(varsList_(i-1)));
    return toList apply(1..length k,i->  
        contract(basis(ones -flatten entries (id_(ZZ^(length k)))_(i-1) ,ringT),
        transpose contract(basis(flatten entries (id_(ZZ^(length k)))_(i-1),ringT),
        genericT)))   
    )
\end{lstlisting}
}

For instance, the output of \texttt{flatMat(w,(2,2,2))} is a list containing the following three matrices:
$$
\begin{pmatrix}
      w_{1,1,1}&w_{1,1,2}&w_{1,2,1}&w_{1,2,2}\\
      w_{2,1,1}&w_{2,1,2}&w_{2,2,1}&w_{2,2,2}\\
      \end{pmatrix}, \quad 
      \begin{pmatrix}
      w_{1,1,1}&w_{1,1,2}&w_{2,1,1}&w_{2,1,2}\\
      w_{1,2,1}&w_{1,2,2}&w_{2,2,1}&w_{2,2,2}\\
      \end{pmatrix}, \quad 
      \begin{pmatrix}
      w_{1,1,1}&w_{1,2,1}&w_{2,1,1}&w_{2,2,1}\\
      w_{1,1,2}&w_{1,2,2}&w_{2,1,2}&w_{2,2,2}\\
      \end{pmatrix}.
$$

We will use the function \texttt{flatMat} to compute the matrices of the flattenings in the two different sets of variables~$a_{i_1,\dots,i_d}$ and $b_{i_1,\dots,i_d}$. 

The next function \texttt{prodPowerDetList} takes as input two sequences $\bfk=(k_1,\dots,k_d)$ and $\bfn=(n_1,\dots,n_d)$ and returns the terms of the polynomial $\prod_{\ell=1}^d \det(A^{(\ell)}(B^{(\ell)})^T)^{n_\ell-k_\ell}$.

{\small
\begin{lstlisting}[language=code]
prodPowerDetList = (k,n)->(
    K := product toList k;
    R1 := QQ[a_(length k:1)..a_k,Degrees =>entries(id_(ZZ^K))];
    R2 := QQ[b_(length k:1)..b_k,Degrees =>entries(id_(ZZ^K))];
    finalR := R1**R2; 
    flatList1 := apply(flatMat(a,k),i->sub(i,finalR));
    flatList2 := apply(flatMat(b,k),i->sub(i,finalR));
    detList := apply(1..length k, i->(    
	det((flatList1_(i-1))*(transpose flatList2_(i-1)),Strategy=>Cofactor)
	) );
    return terms(product (toList(apply(1..length k,
    i->detList_(i-1)^(n_(i-1)-k_(i-1))))))
 )
\end{lstlisting} 
}

We are now ready to construct a final function that computes the degree of a subspace variety $X_{\bfk}\subset \PP^N$, once taken as input the sequences $\bfk=(k_1,\dots,k_d)$ and $\bfn=(n_1,\dots,n_d)$. Our first step is to define a list \texttt{L} containing the terms of the polynomial $\prod_{\ell=1}^d \det(A^{(\ell)}(B^{(\ell)})^T)^{n_\ell-k_\ell}$. We then collect in \texttt{sameBdegList} all terms of \texttt{L} that have the same bi-degree in the variables $a_{i_1,\dots,i_d}$, $b_{i_1,\dots,i_d}$. The list \texttt{finalSum} contains all terms that compose the sum $g(\bfk,\bfn)=(k_1(n_1-k_1))! \cdots (k_d(n_d-k_d))!\cdot f(\bfk,\bfn)$. Having now all ingredients constituting the formula of the main \Cref{main}, we are able to return $\deg(X_{\bfk})$.

{\small
\begin{lstlisting}[language=code]
degreeSubspace = (k,n)->(
    L := prodPowerDetList(k,n);
    K := product toList k;
    sameBdegList := L_(
    	 positions(L,i-> 
	    (degree i)_{0..K-1} == (degree i)_{K..(length degree i-1)}));	  
    finalSum := apply(sameBdegList,i->(
    	 de := (degree i)_{0..K-1};
     	 molt := product apply(de,x->x!);
             (coefficients(i))_1_(0,0)*molt)
	 );
    prodDegG := product apply(toList(1..length k),z->degG(k_(z-1),n_(z-1)) );
    prodDimG := product toList apply(1..length k,i->(k_(i-1)*(n_(i-1)-k_(i-1)))!); 
    return (sub(sum finalSum,ZZ))*prodDegG/prodDimG 
    ) 
\end{lstlisting}    
}
For instance, the first entry in the table in \Cref{ex2} is computed by the following commands:   {\small
\begin{lstlisting}[language=code]
k = (1,2,2); n = (3,3,3)
degreeSubspace(k,n)
\end{lstlisting}
}

\bigskip
\bigskip

\end{document}